\newtheorem{theorem}{Theorem}[section]
\newtheorem{corollary}{Corollary}[section]
\newtheorem{remark}{Remark}[section]
\numberwithin{equation}{section}
\begin{document}

\begin{center}
{\Large{\bf Certain hybrid polynomials associated with Sheffer sequences}}\\~~

{\bf Nabiullah Khan, Talha Usman and  Mohd Aman${}^{*}$, \\
{\footnote {\textit{key words and phrases}. Legendre-Gould Hopper based Sheffer polynomials; Monomiality principle; Operational techniques.}
 }}\\

\end{center}

\parindent=8mm
\noindent
\begin{abstract}
Inspired by the framework of operational methods and based on the generating functions of Legendre-Gould Hopper polynomials and Sheffer sequences, we discuss certain new mixed type polynomials and their important properties. We show that the use of operational nature allows the relevant polynomials to be unified and general in nature. It is illustrated how the polynomials, we develop, provide an easy derivation of a wide class of new and known polynomials, and their respective properties.
\end{abstract}

\noindent

\noindent
{\bf{\em Keywords:}}~~Legendre-Gould Hopper polynomials, Sheffer sequences; Monomiality principle; Operational techniques. \\

\section{Introduction and preliminaries}
Let the two types of series $f(t)$ and $g(t)$ be delta series and invertible series  respectively. Let us consider a polynomial sequence $ s_{n} (x)$ satisfying the conditions
\begin{equation}\label{1.6}
\big<g(t)~f(t)^{k}|~s_{n}(x)\big>=n!~\delta_{n,k}~,\quad \forall~ n, k\ge 0.
\end{equation}
This sequence is called the Sheffer sequence for $(g(t),f(t))$ which is usually denoted as $s_{n}(x)\sim(g(t)\,f(t))$. If $s_{n}(x)\sim(1,f(t))$, then $s_{n}(x)$ is called the associated sequence for $f(t)$. If $s_{n}(x)\sim(g(t),t)$, then $s_{n}(x)$ is called the Appell sequence (see \cite [ p. 17]{2SRom}).
The polynomial sequence $\{s_{n} (x)\}_{n=0}^{\infty }$ ($ s_{n} (x)$ being a polynomial of degree $n$) is also called  Sheffer A-type zero \cite [ p.222 (Theorem 72)]{2Rain}, (which we shall hereafter call Sheffer-type), if $s_{n} (x)$ whose exponential generating function is of the form
\begin{equation}\label{1.1}
A(t)\exp\left(xH(t)\right)=\sum _{n=0}^{\infty}s_{n} (x)\frac{t^{n}}{n!},
\end{equation}
where $A(t)$ and $H(t)$ have (at least the formal) expansions:
\begin{equation}\label{1.2}
A(t)=\sum _{n=0}^{\infty }A_{n} \frac{t^{n} }{n!},\quad A_{0} \ne 0
\end{equation}
and
\begin{equation}\label{1.3}
H(t)=\sum _{n=1}^{\infty }H_{n} \frac{t^{n} }{n!},\quad H_{1} \ne 0,
\end{equation}



According to Roman \cite [p.18 (Theorem 2.3.4)]{2SRom},  the polynomial sequence $s_{n}(x)$  
has the generating function
\begin{equation}\label{1.7}
\frac {1}{g\big(f^{-1}(t)\big)} \exp \big(xf^{-1}(t)\big)=\sum _{n=0}^{\infty }s_{n} (x)\frac{t^{n} }{n!},
\end{equation}
for all $x$ in $\mathbb C$, where $f^{-1}(t)$ is the compositional inverse of $f(t)$.\\

In view of equations \eqref{1.1} and \eqref{1.7}, we have
\begin{equation}\label{1.8}
A(t)=\frac {1}{g\big(f^{-1}(t)\big)}
\end{equation}
and
\begin{equation}\label{1.9}
H(t)=f^{-1}(t).
\end{equation}
The idea of Quasi-Monomial (QM) treatment of special polynomials emerged within the context of poweroid, suggested by J. F. Steffenson \cite{2JFStef}. The QM principle has been proved to be a powerful tool for the investigation of the properties of a wide class of polynomials like Sheffer polynomials, see for example \cite{2GGDat}. The series of papers developed during the last years, for example the work of Dattoli (see \cite{2Dat}) deepend their roots in QM theory and reformulated the so called study of special polynomials.\\
 According to such a point of view, we recall the following definitions:\\
A family of polynomials $p_{n}(x),~({n \in \mathbb{N},~\forall x\in\mathbb{R}})$ is said to be a QM, if a couple of operators, $\hat{M}$ and $\hat{P}$, hereafter called respectively, the multiplicative and  derivative operators, do exist and satisfy the rules:
\begin{equation}\label{mcap}
\hat{M}\, \{p_{n} (x)\}=p_{n+1} (x)
\end{equation}
and
\begin{equation}\label{pcap}
\hat{P}\, \{p_{n} (x)\}=n ~p_{n-1} (x).
\end{equation}
For $n \in \mathbb{N}, x\in\mathbb{R}$, the combinations of the two identities gives 
\begin{equation}\label{mpcap}
\hat{M}\,\hat{P} \{p_{n} (x)\}=n~p_{n} (x)
\end{equation}
and 
\begin{equation}
\hat{P}\,\hat{M} \{p_{n} (x)\}=(n+1)~p_{n} (x),
\end{equation}
which allows the commutation relation of the operators to be
\begin{equation}\label{p,m}
[\hat{P},\hat{M}]=\hat{P}\hat{M}-\hat{M}\hat{P}=\hat{1}
\end{equation}
and thus can be viewed as the generators of a Weyl group structure.

Assuming hereafter, the vacuum is such that  $p_{0} (x)=1$, then, from \eqref{mcap}, the family of polynomials $p_{n}(x)$ can be can be generated as:
\begin{equation}\label{1.14}
\hat{M}^{n}\{1\}=p_{n}(x),\quad \forall n\in\mathbb{N},~\forall x\in\mathbb{R}.
\end{equation} 
The generating function of $p_{n}(x)$ straightforwardly follows from the identity \eqref{1.14}, namely
\begin{equation}\label{1.15}
\sum _{n=0}^{\infty }p_{n}(x)\frac{t^{n}}{n!}=\sum _{n=0}^{\infty }\frac{t^{n}\hat{M}^{n}}{n!}\{1\}=\exp{(t\hat{M})}\{1\},\quad|t| < \infty.
\end{equation}
The previous definitions summed up briefly, gives enough content of QM theory to exploit their use on special polynomials.

\vspace{.25cm}
We recall the Gould Hopper polynomials [\cite{2HWG}, p. 58, (6.2)] (also called sometimes as higher-order Hermite or Kamp\'{e} de F\'{e}riet polynomials) $H_{n}^{(s)(x,y)}$, which are defined as
\begin{equation}\label{GHP}
H_{n}^{(s)}(x,y)=n!\sum_{k=0}^{[\frac{n}{s}]}\frac{y^{k}x^{n-sk}}{k!\,(n-sk)!},
\end{equation}
and their generating function reads
\begin{equation}\label{GHPgen}
\exp(xt+yt^{s})=\sum_{n=0}^{\infty}\,H_{n}^{(s)}(x,y)\frac{t^{n}}{n!}.
\end{equation}
They appear as the solution of the generalized heat equation (see \cite{2SGDat})
\begin{equation}\label{heat}
\frac{\partial}{\partial y}\,f(x,y)=\frac{\partial^{s}}{\partial{x}^{s}}\,f(x,y)\quad f(x,0)=x^{n},
\end{equation}
and under the operational formalism, they are defined as
\begin{equation}\label{1.11}
H_{n}^{(s)}(x,y)=\exp\left(y\frac{\partial^{s}}{\partial{x}^{s}}\right)\{x^{n}\}.
\end{equation}
These polynomials are quasi-monomial under the action of the operators (see \cite{2SGDat})
\begin{equation}\label{GHPoper}
\aligned
&\hat{M}_{H}:=x+sy\frac{\partial^{s-1}}{\partial x^{s-1}},\\
&\hat{P}_{H}:=\frac{\partial}{\partial x}.
\endaligned
\end{equation}
Now we recall the the Legendre polynomials $S_{n}(x,y)$ and $\frac{R_{n}(x,y)}{n!}$ which are defined by the generating functions (see \cite{2GPED}):
\begin{equation}\label{leg1}
\exp(yt)\,C_{0}(-xt^{2})=\sum_{n=0}^{\infty}S_{n}(x,y)
\frac{t^{n}}{n!}
\end{equation}
and 
\begin{equation}\label{leg2}
C_{0}(xt)\,C_{0}(-yt)=\sum_{n=0}^{\infty}\,\frac{R_{n}
(x,y)}{n!}\frac{t^{n}}{n!},
\end{equation}
respectively, where $C_0(x)$ denotes the Bessel-Tricomi function of order $0$. The $nth$ order Bessel-Tricomi function $C_{n}(x)$ defined by the following series \cite [p.150]{2Dat}:
\begin{equation}\label{tricomi}
C_n(x)=x^{-\frac{n}{2}}J_n(2 \sqrt {x})~=~\sum_{k=0}^\infty \frac{(-1)^k\ x^k}{k!\ (n+k)!},\quad n=0,1,2,\ldots\ ,
\end{equation}

with $J_n(x)$ being the ordinary cylindrical Bessel function of first kind \cite{2And}. The operational definition of $0^{th}$-order Bessel-Tricomi function $C_{0}(x)$ is given by:
\begin{equation}\label{tricomi-exp}
C_{0}(\alpha x)=\exp \left(-\alpha D_{x}^{-1}\right)\{1\},
\end{equation}
where $D_{x}^{-1}$ denotes the inverse derivative operator and
\begin{equation}
D_{x}^{-n}\{1\}=\frac{x^{n}}{n!}.
\end{equation}
Lately, by taking as base, the Legendre polynomials $S_{n}(x,y)$ and $\frac{R_{n}(x,y)}{n!}$ in the generating function \eqref{GHPgen} of Gould Hopper polynomials (GHP), the author in \cite{ghazala} introduced and studied the Legendre-Gould Hopper polynomials (LeGHP) ${}_S{H}_n^{(s)}(x,y,z)$ and ${}_R{H}_n^{(s)}(x,y,z)$, which are defined, respectively, by the following generating function:
\begin{equation}\label{leGHP1}
C_0(-xt^2)\exp(yt+zt^{r})=\sum_{n=0}^{\infty}{}_SH_n^{(r)}(x,y,z)\frac{t^n}{n!}
\end{equation}
and
\begin{equation}\label{leGHP2}
C_0(xt)\,C_0(-yt)\,\exp(zt^{r})=\sum_{n=0}^{\infty}\frac{{}_RH_n^{(r)}(x,y,z)}{n!}\frac{t^n}{n!}.
\end{equation}
Systematically, they were found to be quasi-monomial under the action of operators
\begin{equation}\label{leghpcap1}
\aligned
&\hat{M}_{SH}:=y+2D_x^{-1}\frac{\partial}{\partial y}+rz\frac{\partial^{r-1}}{\partial y^{r-1}},\\
&\hat{P}_{SH}=\frac{\partial}{\partial y}
\endaligned
\end{equation}
and
\begin{equation}\label{leghpcap2}
\aligned
&\hat{M}_{RH}:=-D_x^{-1}+D_y^{-1}+rz\frac{\partial^{r-1}}{\partial y^{r-1}},\\
&\hat{P}_{RH}=-\frac{\partial}{\partial x}x\frac{\partial}{\partial x},
\endaligned
\end{equation}
respectively.

\vspace{.25cm}
The LeGHP ${}_SH_{n}^{(s)}(x,y,z)$ and $\frac{{}_RH_{n}^{(s)}(x,y,z)}{n!}$ contain several special polynomials as their particular cases. Their generality can be viewed from the below table:\\

\noindent
\textbf{Table 1. Particular cases of the LeGHP ${}_SH_{n}^{(s)}(x,y,z)$ and $\frac{{}_RH_{n}^{(s)}(x,y,z)}{n!}$ }\\
\\
{\tiny{
\begin{tabular}{lllll}
\hline
\bf{S.} & \bf{Values of } & \bf{Relation Between the }& \bf{Name of }& \bf{Series}  \\
\bf{No.}& \bf{the Indices} & \bf{LeGHP~${}_SH_{n}^{(r)}(x,y,z)$,$\frac{{}_RH_{n}^{(r)}(x,y,z)}{n!}$ }&\bf{the Known}& \bf{Definitions} \\
& \bf{and Variables}& \bf{and its Special Case}&  \bf{Polynomials}&\\
\hline
 I.&  &  &  &  \\
& $x=0$ & ${}_SH_{n}^{(r)}(0,y,z)={}_SH_{n}^{(r)}(y,z)$ & Gould Hopper \cite{2GGGA} &${}_SH_{n}^{(r)}(x,y)=n!\sum_{k=0}^{\frac{n}{r}}\frac{y^{k}x^{n-rk}}{k!(n-rk)!}$\\
&&& &\\
\hline
II. &  & &2-Variable &  \\
&~$z=0$  &${}_SH_n^{(r)}(x,y,0)={}_{2L}L_{n}(x,y)$ & Legendre - &${}_{2L}L_{n}(x,y)=n!\sum_{s=0}^{\frac{n}{2}}\frac{x^{s}y^{n-2s}}{(s!)^{2}(n-2s)!}$ \\
&&&type \cite{2GAMDat}&\\
\hline
 III.&i. $r=m;~x=0,$  & & 2-variable -&  \\
& ~$y\rightarrow-D_{x}^{-1}$,$z\rightarrow y$  & ${}_SH_{n}^{(m)}(0,D_{x}^{-1},y)={}_{[m]}L_n(x,y)$& generalized Lagurre&${}_{[m]}L_n(x,y)=n!\sum_{k=0}^{\frac{n}{m}}\frac{y^{k}x^{n-mk}}{k![(n-mk)!]^{2}}$ \\
&ii.$r=m; y=0,z\rightarrow y$& ${}_RH_{n}^{(m)}(x,0,y)={}_{[m]}L_n(x,y)$ & type \cite{2GGADat}&\\
\hline
IV.& $r=m-1;~x=0,$  & & generalized&  \\
& ~$y\rightarrow x$,$z\rightarrow y$  & ${}_SH_{n}^{(m-1)}(0,x,y)=U_n^{(m)}(x,y)$& Chebyshev \cite{2HWG}&$U_n^{(m)}(x,y)=\sum_{k=0}^{\frac{n}{m}}\frac{(n-k)!y^{k}x^{n-mk}}{k!(n-mk)!}$ \\
 &\\
\hline
V.&i. $r=1;~x=0,z\rightarrow-D_{x}^{-1}$  & & 2-variable -&  \\
& ~$y\rightarrow-D_{x}^{-1}$,$z\rightarrow y$  & ${}_SH_{n}^{(1)}(0,y,-D_{x}^{-1})=L_n(x,y)$&  Lagurre  \cite{2SGDat}&$L_n(x,y)=n!\sum_{s=0}^{n}\frac{(-x)^{s}y^{n-s}}{(s!)^{2}(n-s)!}$ \\
&ii.$r=1; y=0,z\rightarrow y$& ${}_RH_{n}^{(1)}(x,0,y)=L_n(x,y)$&\\
\hline
VI. &  & &2-Variable &  \\
&~$z=0$  &${}_RH_n^{(r)}(x,y,0)=R_{n}(x,y)$ & Legendre \cite{2GDat} &$R_{n}(x,y)=(n!)^{2}\sum_{s=0}^{n}\frac{y^{s}(-x)^{n-s}}{(s!)^{2}[(n-2s)!]^{2}}$ \\
&&&\\
\hline
VII. & $x=0,y\rightarrow x, z\rightarrow yD_{y}y$, & ${}_SH_{n}^{(r)}(0,x,yD_{y}y)=e_n^{(r)}(x,y)$ &truncated of order r &$e_n^{(r)}(x,y)=n!\sum_{k=0}^{\frac{n}{r}}\frac{x^{n-rk}y^{K}}{(n-rk)!}$  \\
&&2-variable \cite{2DAT}&\\
\hline
VIII. & $r=2,x=0$, & ${}_SH_{n}^{(2)}(0,y,z)=H_n(y,z)$ &2-variable &$H_n(x,y)=n!\sum_{k=0}^{\frac{n}{2}}\frac{x^{n-2k}y^{K}}{k!(n-2k)!}$  \\
&&&Hermite Kamp\'{e} de F\'{e}riet \cite{2GADat}&\\
\hline
IX.&i. $r=2$; $x=0$,  & ${}_SH_{n}^{(2)}(0,-D_x^{-1},y)=G_{n}(x,y)$ & Hermite&  \\
  & $y\rightarrow D_{x}^{-1},z\rightarrow y$&${}_RH_{n}^{(2)}(0,x,y)=G_{n}(x,y)$&type \cite{2GPED} &  $G_{n}(x,y)=n!\sum_{k=0}^{\frac{n}{2}} \frac{y^{k}x^{n-2k}}{k![(n-2k)!]^{2}}$\\
 &$ii. r=2; x=0$,&\\&$y\rightarrow x,z\rightarrow
 y$&&&\\
\hline
X.&i. $x\rightarrow\frac{(x^{2}-1)}{4}$,  & ${}_SH_{n}^{(r)}(\frac{(x^{2}-1)}{4},x,0)=P_{n}(x,y)$   \\
& $y\rightarrow x, z=0$&${}_RH_{n}^{(1)}(\frac{(1-x)}{2},\frac{(x+1)}{2},0)=P_{n}(x,y)$ & Legendre \cite{2GDat} & $P_{n}(x,y)=n!\sum_{k=0}^{\frac{n}{2}} \frac{(x^{2}-1)^{k}x^{n-2k}}{2^{2k}(k!)^{2}(n-2k)!}$\\
&$ii. r=1; x\rightarrow\frac{(1-x)}{2}$,&\\&$y\rightarrow \frac{(1+x)}{2},z=
0$&&&\\

\hline
XI.& $r=3$; $x\rightarrow zD_{z}z$,  & ${}_SH_{n}^{(3)}(zD_{z}z,x,y)=H_{n}^{(3,2)}(x,y,z)$ & Bell-type \cite{2GDattoli}& $ H_{n}^{(3,2)}(x,y,z)=n!\sum_{k=0}^{\frac{n}{3}}\frac{y^{k}H_{n-3k}^{(2)}(x,y)}{k!r!(n-3k-2r)!}$ \\
  & $y\rightarrow x$, $z\rightarrow y$ & &&$=n!\sum_{k=0}^{\frac{n}{3}}\sum_{s=0}^{\frac{n-3k}{2}}\frac{y^{k}z^{s}x^{n-3k-2s}}{k!s!(n-3k-2s)!}$ \\
\hline

\end{tabular}}}\\
\\

The special cases mentioned in Table 1 will be exploited later to derive the relevant results.\\

In this paper, the families of Legendre-Gould Hopper based Sheffer polynomials are introduced by using the concepts and the methods associated with monomiality principle. In Section 2, we introduce the  Legendre-Gould Hopper based Sheffer polynomials (LeGHSP) ${}_{{}_{s}LeG^{(r)}}s_{n}(x,y,z)$ and frame these polynomials within the context of monomiality principle formalism. In Section 3, we consider some examples of these polynomials. In Section 4, the operational and integral representations  for the Laguerre-Gould Hopper based Sheffer polynomials are established. In Section 5, results are obtained for the members of Legendre-Gould Hopper based Sheffer and Legendre-Gould Hopper based associated Sheffer polynomial families by considering some members of the Sheffer and associated Sheffer families respectively. \\
\\

\section{Legendre-Gould Hopper based Sheffer polynomials}

To introduce the Legendre-Gould Hopper based Sheffer polynomials ( LeGHSP) denoted by ${}_{{}_{s}LeG^{(r)}}s_{n}(x,y,z)$ and ${}_{{}_{r}LeG^{(r)}}s_{n}(x,y,z)$ we prove the following results:\\

\begin{theorem}\label{t2.1}
The generating function of the Legendre-Gould Hopper based Sheffer polynomials ${}_{{}_{s}LeG^{(r)}}s_{n}(x,y,z)$ reads 
\begin{equation}\label{2.1a}
\frac {1}{g\big(f^{-1}(t)\big)}C_0\left(-x\big(f^{-1}(t)\big)^2\right)\exp\left(yf^{-1}(t)+z\big(f^{-1}(t)\big)^r\right)=\sum_{n=0}^{\infty}{}_{{}_{s}LeG^{(r)}}s_{n}(x,y,z)\frac{t^n}{n!},
\end{equation}
or
\begin{equation}\label{2.1b}
A(t)C_0\left(-x\big(H(t)\big)^2\right)\exp\left(yH(t)+z\big(H(t)\big)^r\right)=\sum_{n=0}^{\infty}{}_{{}_{s}LeG^{(r)}}s_{n}(x,y,z)\frac{t^n}{n!}.
\end{equation}
\end{theorem}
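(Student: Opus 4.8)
The plan is to realize the Legendre--Gould Hopper based Sheffer polynomials through the operational substitution $x\mapsto\hat{M}_{SH}$ inside the ordinary Sheffer sequence, and then to exploit the monomiality of the LeGHP to collapse the resulting operator exponential into the closed form appearing on the left of \eqref{2.1a}. The whole argument is a formal power-series manipulation in $t$, with one conceptual point (the operator substitution) that deserves genuine justification.

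First I would record the basic consequence of the monomiality relation \eqref{1.14} applied to the multiplicative operator \eqref{leghpcap1}, namely $\hat{M}_{SH}^{\,n}\{1\}={}_SH_n^{(r)}(x,y,z)$. Summing the corresponding exponential series and invoking the generating function \eqref{leGHP1} then yields, for a scalar parameter $\xi$,
\[
\exp\!\big(\xi\,\hat{M}_{SH}\big)\{1\}=\sum_{n=0}^{\infty}{}_SH_n^{(r)}(x,y,z)\frac{\xi^{n}}{n!}=C_0\!\left(-x\xi^{2}\right)\exp\!\big(y\xi+z\xi^{r}\big).
\]
This identity is the engine of the proof: it converts any occurrence of $\exp(\xi\hat{M}_{SH})$ acting on the vacuum $1$ into an explicit product of a Bessel--Tricomi factor and an ordinary exponential.

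Next I would define the LeGHSP by replacing the argument $x$ of the Sheffer sequence with the multiplicative operator $\hat{M}_{SH}$, that is, ${}_{{}_{s}LeG^{(r)}}s_{n}(x,y,z):=s_{n}(\hat{M}_{SH})\{1\}$, and assemble their exponential generating series. Starting from the Sheffer generating function \eqref{1.1}, substituting $x\mapsto\hat{M}_{SH}$ and letting the operator act on $1$ gives
\[
\sum_{n=0}^{\infty}{}_{{}_{s}LeG^{(r)}}s_{n}(x,y,z)\frac{t^{n}}{n!}=A(t)\exp\!\big(\hat{M}_{SH}H(t)\big)\{1\}.
\]
Because $H(t)$ is a scalar formal series in $t$ and hence commutes with $\hat{M}_{SH}$, I may specialize the engine identity at $\xi=H(t)$, turning the right-hand side into $A(t)\,C_0\big(-x(H(t))^{2}\big)\exp\big(yH(t)+z(H(t))^{r}\big)$, which is exactly \eqref{2.1b}. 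Inserting the Sheffer dictionary \eqref{1.8} and \eqref{1.9}, i.e. $A(t)=1/g(f^{-1}(t))$ and $H(t)=f^{-1}(t)$, then produces \eqref{2.1a}.

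The step that requires care, rather than routine calculation, is the legitimacy of the substitution $x\mapsto\hat{M}_{SH}$ followed by the specialization $\xi=H(t)$: one must confirm that $\exp(\hat{M}_{SH}H(t))\{1\}$ really is the scalar generating function $\exp(\xi\hat{M}_{SH})\{1\}$ evaluated at $\xi=H(t)$. This is sound because $\hat{M}_{SH}$ is a single fixed operator, so its powers commute and no operator-ordering ambiguity can arise, while $H(t)$ belongs to the commuting algebra of formal series in $t$; I would therefore phrase the identities as equalities of formal power series in $t$ to bypass convergence issues, remarking in addition that $C_0$ and the exponential are entire so that the manipulations retain analytic meaning on the relevant domain.
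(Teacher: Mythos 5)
Your argument is correct, and it reaches \eqref{2.1b} by a route that differs from the paper's in the one step that matters. Both proofs begin identically, by replacing $x$ with the multiplicative operator $\hat{M}_{SH}$ of \eqref{leghpcap1} in the Sheffer generating function and defining ${}_{{}_{s}LeG^{(r)}}s_{n}(x,y,z)=s_{n}(\hat{M}_{SH})$ as in \eqref{2.8}. Where you diverge is in evaluating $\exp\big(\hat{M}_{SH}\,H(t)\big)\{1\}$: you appeal to quasi-monomiality, i.e. $\hat{M}_{SH}^{\,n}\{1\}={}_SH_n^{(r)}(x,y,z)$ together with \eqref{1.15}, so that the operator exponential collapses directly to the known generating function \eqref{leGHP1} evaluated at $\xi=H(t)$. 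The paper instead disentangles the operator exponential by hand, applying the Crofton-type identity \eqref{crofton} twice to peel off the factors $\exp\big(z\,\partial_y^{r}\big)$ and $\exp\big(D_x^{-1}\partial_y^{2}\big)$, and then uses the operational definition \eqref{tricomi-exp} of $C_0$ to recover the Bessel--Tricomi factor --- in effect re-deriving \eqref{leGHP1} inside the proof. Your version is shorter and makes the structure transparent (the LeGHSP generating function is literally the Sheffer generating function with the LeGHP generating function substituted for $\exp(xH(t))$), and you are more careful than the paper about the role of the vacuum and about why the scalar specialization $\xi=H(t)$ commutes past the fixed operator $\hat{M}_{SH}$. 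What the paper's longer route buys is the intermediate operational factorization $\exp\big(z\,\partial_y^{r}\big)\exp\big(D_x^{-1}\partial_y^{2}\big)\exp\big(yf^{-1}(t)\big)$, which is exactly the form reused in Section 4 to obtain the operational representation of Theorem 4.1; if you adopt your proof, that representation would need a separate (though easy) derivation.
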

\begin{proof}
We follow the operational technique of replacing $x$ in equation \eqref{1.7} by the multiplicative operator ${}_s\hat {M}_{LeG}$ of the LeGHP ${}_S{H}_n^{(s)}(x,y,z)$, we have indeed
\begin{equation}\label{2.3}
\frac {1}{g\big(f^{-1}(t)\big)} \exp\big({}_s\hat {M}_{LeG}f^{-1}(t)\big)=\sum _{n=0}^{\infty }s_{n}\big({}_s\hat {M}_{LeG}\big) \frac{t^{n} }{n!},
\end{equation}
which by using the expression in \eqref{leghpcap1} and the Crofton-type identity \cite [p. 12]{2GPDat}
\begin{equation}\label{crofton}
f\left(y+m\lambda\frac{d^{m-1}}{dy^{m-1}}\right)\big\{1\big\}=\exp\left(\lambda\frac{d^m}{dy^m}\right)\Big\{f(y)\Big\},
\end{equation}
yields
\begin{equation}
\frac {1}{g\big(f^{-1}(t)\big)}\exp{\Big(z\frac{\partial^{r}}{\partial y^{r}}\Big)} \exp{\left(\left(y+2D_x^{-1}\frac{\partial}{\partial y}\right)f^{-1}(t)\right)}=\sum _{n=0}^{\infty }s_{n}\Big(y+2D_x^{-1}\frac{\partial}{\partial y}+rz\frac{\partial^{r-1}}{\partial y^{r-1}}\Big) \frac{t^{n} }{n!}.
\end{equation}
Further using the identity \eqref{crofton} gives
\begin{equation}\label{2.6}
\frac {1}{g\big(f^{-1}(t)\big)}\exp{\Big(z\frac{\partial^{r}}{\partial y^{r}}\Big)}\exp{\Big(D_x^{-1}\frac{\partial^{2}}{\partial y^{2}}\Big)} \exp({yf^{-1}(t)})=\sum _{n=0}^{\infty }s_{n}\Big(y+2D_x^{-1}\frac{\partial}{\partial y}+rz\frac{\partial^{r-1}}{\partial y^{r-1}}\Big) \frac{t^{n} }{n!}.
\end{equation}
Now, using the exponential expansion and \eqref{tricomi-exp} in \eqref{2.6}, gives
\begin{equation}
\frac {1}{g\big(f^{-1}(t)\big)}C_0\left(-x\big(f^{-1}(t)\big)^2\right)\exp{\Big(z\frac{\partial^{r}}{\partial y^{r}}\Big)} \exp({yf^{-1}(t)})=\sum _{n=0}^{\infty }s_{n}\Big(y+mD_x^{-1}\frac{\partial}{\partial y}+rz\frac{\partial^{r-1}}{\partial y^{r-1}}\Big) \frac{t^{n} }{n!},
\end{equation}
which, on further simplification and by denoting the resultant LeGHSP by ${}_{{}_{s}LeG^{(r)}}s_{n}(x,y,z)$, namely
\begin{equation}\label{2.8}
{}_{{}_{s}LeG^{(r)}}s_{n}(x,y,z)=s_{n}({}_s\hat{M}_{LeG})=s_{n}\Big(y+2D_x^{-1}\frac{\partial}{\partial y}+rz\frac{\partial^{r-1}}{\partial y^{r-1}}\Big),
\end{equation}
the assertion \eqref{2.1a} directly follows. Also, the equivalent generating function in \eqref{2.1b} follows easily in view of equations \eqref{1.8} and \eqref{1.9} .
\end{proof}
\begin{theorem}
	The generating function of Legendre-Gould Hopper based Sheffer polynomials $\frac{{}_{{}_{r}LeG^{(r)}}s_{n}(x,y,z)}{n!}$ reads 
\begin{equation}\label{2.9}
\frac {1}{g\big(f^{-1}(t)\big)}C_0\left(xf^{-1}(t)\right)C_{0}(-yf^{-1}(t))\exp\left(z\big(f^{-1}(t)\big)^r\right)=\sum_{n=0}^{\infty}{}_{{}_{r}LeG^{(r)}}s_{n}(x,y,z)\frac{t^n}{(n!)^{2}},
\end{equation}
	or
	\begin{equation}\label{2.10}
	A(t)C_0\left(xf^{-1}(t)\right)C_{0}(-yf^{-1}(t))\exp\left(z\big(H(t)\big)^r\right)=\sum_{n=0}^{\infty}{}_{{}_{r}LeG^{(r)}}s_{n}(x,y,z)\frac{t^n}{(n!)^{2}}.
	\end{equation}
\end{theorem}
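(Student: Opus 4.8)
The plan is to reproduce the operational argument used for Theorem~\ref{t2.1}, but now to replace the variable $x$ in the Sheffer generating function \eqref{1.7} by the multiplicative operator $\hat M_{RH}$ of the LeGHP $\frac{{}_RH_n^{(r)}(x,y,z)}{n!}$ recorded in \eqref{leghpcap2}, and to let both sides act on the vacuum $1$. First I would write
\[
\frac{1}{g(f^{-1}(t))}\exp\!\big(\hat M_{RH}\,f^{-1}(t)\big)\{1\}=\sum_{n=0}^{\infty}s_n(\hat M_{RH})\{1\}\,\frac{t^n}{n!},
\]
and define $\frac{{}_{{}_{r}LeG^{(r)}}s_n(x,y,z)}{n!}:=s_n(\hat M_{RH})\{1\}$. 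With this normalisation the right-hand side is $\sum_{n} {}_{{}_{r}LeG^{(r)}}s_n(x,y,z)\,t^n/(n!)^2$, which is exactly the factor $(n!)^2$ appearing in \eqref{2.9}, so the bookkeeping matches the statement automatically.

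The core of the proof is the evaluation of the operator exponential on the left. Since $\hat M_{RH}=-D_x^{-1}+D_y^{-1}+rz\,\partial_y^{r-1}$ splits into an $x$-part that commutes with the two $y$-parts, I would first factor
\[
\exp\!\big(\hat M_{RH}f^{-1}(t)\big)\{1\}=\exp\!\big(-D_x^{-1}f^{-1}(t)\big)\{1\}\cdot\exp\!\Big(\big(D_y^{-1}+rz\,\partial_y^{r-1}\big)f^{-1}(t)\Big)\{1\},
\]
and use the operational definition \eqref{tricomi-exp} of the Bessel--Tricomi function to identify the first factor as $C_0\!\big(xf^{-1}(t)\big)$. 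For the remaining $y$-factor I would aim to peel off the Gould--Hopper piece $\exp\!\big(z(f^{-1}(t))^r\big)$ together with the Legendre piece $C_0\!\big(-yf^{-1}(t)\big)$, again through \eqref{tricomi-exp}, so that the product of the three pieces reproduces the left-hand side of \eqref{2.9}; the equivalent form \eqref{2.10} then follows from \eqref{1.8} and \eqref{1.9}.

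The main obstacle is precisely this last evaluation of the $y$-factor. In Theorem~\ref{t2.1} the Gould--Hopper term was stripped off by the Crofton-type identity \eqref{crofton}, which turns $y+rz\,\partial_y^{r-1}$ into $\exp(z\,\partial_y^r)$; that step was legitimate because the accompanying base operator was the multiplication operator $y$, satisfying $[\partial_y,y]=1$. Here the base operator is instead the inverse derivative $D_y^{-1}$, which does \emph{not} commute with $\partial_y^{r-1}$, so \eqref{crofton} does not transcribe verbatim and the naive splitting $\exp\big(D_y^{-1}f^{-1}(t)\big)\exp\big(rz\,\partial_y^{r-1}f^{-1}(t)\big)$ is unavailable. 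The cleanest way around this is not to re-derive the $y$-factor at all, but to invoke the already-established generating function \eqref{leGHP2}: by the monomiality formula \eqref{1.15} applied to the quasi-monomial family $\frac{{}_RH_n^{(r)}}{n!}$ with operator $\hat M_{RH}$, one has $\exp(\tau\hat M_{RH})\{1\}=C_0(x\tau)C_0(-y\tau)\exp(z\tau^r)$, and substituting the formal series $\tau=f^{-1}(t)$ delivers the left-hand side of \eqref{2.9} in one stroke, sidestepping the non-commutativity entirely. I would therefore present Steps~1--2 explicitly and close the argument by this substitution.
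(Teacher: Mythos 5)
Your proposal is correct, and its skeleton coincides with the paper's: the paper likewise rewrites the Sheffer generating function \eqref{1.7} with $x$ replaced by the multiplicative operator ${}_r\hat M_{LeG}$ of \eqref{leghpcap2}, identifies $\tfrac{1}{n!}\,{}_{{}_{r}LeG^{(r)}}s_n(x,y,z)$ with $s_n({}_r\hat M_{LeG})$ acting on the vacuum, and then passes to \eqref{2.10} via \eqref{1.8} and \eqref{1.9}. Where you genuinely diverge is in the evaluation of $\exp\big({}_r\hat M_{LeG}\,f^{-1}(t)\big)\{1\}$: the paper claims to decouple the exponential ``by the Crofton-type identity'' and reaches the product form ``after some calculations,'' whereas you observe, correctly, that \eqref{crofton} is stated for a base operator $y$ satisfying $[\partial_y,y]=1$ and does not transcribe verbatim when the base is the inverse derivative $D_y^{-1}$ (which does not commute with $\partial_y^{r-1}$ on constants), so the naive factorization is not available. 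Your replacement step, invoking the monomiality formula \eqref{1.15} together with the quoted quasi-monomiality of ${}_RH_n^{(r)}/n!$ and the generating relation \eqref{leGHP2} to get $\exp\big(\tau\,{}_r\hat M_{LeG}\big)\{1\}=C_0(x\tau)\,C_0(-y\tau)\exp(z\tau^r)$ with $\tau=f^{-1}(t)$, delivers the left-hand side of \eqref{2.9} in one stroke and stays entirely within the formalism of Section 1; it is cleaner and more honest than the paper's unstated ``calculations,'' at the mild cost of leaning on the cited quasi-monomial structure rather than re-deriving the factorization. Your intermediate splitting of the $-D_x^{-1}$ part via \eqref{tricomi-exp} is fine but becomes redundant once the monomiality identity is invoked. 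The bookkeeping producing the factor $(n!)^2$ on the right-hand side is handled exactly as in the paper.
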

\begin{proof}
	Following the same procedure as of Theorem \ref{t2.1}, that is, using the multiplicative operator ${}_r\hat{M}_{LeG}$ of the LeGHP $\frac{{{}_{r}LeG^{(r)}}s_{n}(x,y,z)}{(n!)}$ given in \eqref{leghpcap2}, we rewrite generating function \eqref{1.7} as
	\begin{equation}
	\frac {1}{g\big(f^{-1}(t)\big)} \exp\big({}_r\hat {M}_{LeG}f^{-1}(t)\big)=\sum _{n=0}^{\infty }s_{n}\big({}_r\hat {M}_{LeG}\big) \frac{t^{n} }{n!}.
	\end{equation}
	Using the expression of ${}_r\hat{M}_{LeG}$ given in equation \eqref{leghpcap2} and then decoupling the exponential operator in the l.h.s. of the resultant equation by using the Crofton-type identity \cite [p. 12]{2GPDat}
	\begin{equation}
	f\left(y+m\lambda\frac{d^{m-1}}{dy^{m-1}}\right)\big\{1\big\}=\exp\left(\lambda\frac{d^m}{dy^m}\right)\Big\{f(y)\Big\},
	\end{equation}
	After some caculations, we find
	\begin{equation}
	\frac {1}{g\big(f^{-1}(t)\big)}C_0\left(xf^{-1}(t)\right)C_{0}(-yf^{-1}(t))\exp{\Big(z\frac{\partial^{r}}{\partial y^{r}}\Big)} \exp({f^{-1}(t)})=\sum _{n=0}^{\infty }s_{n}\Big(-D_x^{-1}+D_y^{-1}+rz\frac{\partial^{r-1}}{\partial y^{r-1}}\Big) \frac{t^{n} }{n!},
	\end{equation}
	from which the assertion \eqref{leghpcap2} follows. Also, in view of equations \eqref{1.8} and \eqref{1.9}, generating function \eqref{leghpcap2} can be expressed equivalently as equation \eqref{2.10}.
\end{proof}
In order to show that the LeGHSP ${}_{{}_{s}LeG^{(r)}}s_{n}(x,y,z)$ and $\frac{{{}_{r}LeG^{(r)}}s_{n}(x,y,z)}{n!}$ satisfy the monomiality principle, we prove the following results:
\begin{theorem}
The Legendre-Gould Hopper based Sheffer polynomials ${}_{{}_{s}LeG^{(r)}}s_{n}(x,y,z)$ are quasi-monomial under the action of the following multiplicative and derivative operators:
\begin{equation}\label{2.14}
{}_s\hat {M}_{LeGs}=\left(y+2D_x^{-1}\frac{\partial}{\partial y}+rz\frac{\partial^{r-1}}{\partial y^{r-1}}-\frac {g^{\prime} \left(\partial_{y}\right)} {g\left(\partial_{y}\right)}\right)\frac {1} {f^{\prime}\left(\partial_{y}\right)},
\end{equation}
or
\begin{equation}\label{2.15}
{}_s\hat {M}_{LeGs}=\left(y+2D_x^{-1}\frac{\partial}{\partial y}+rz\frac{\partial^{r-1}}{\partial y^{r-1}}+\frac {A^{\prime} \left(H^{-1}\left(\partial_{y}\right)\right)}{A\left(H^{-1}\left(\partial_{y}\right)\right)}\right)H'\left(H^{-1}\left(\partial_{y}\right)\right)
\end{equation}
and
\begin{equation}\label{2.16}
{}_s\hat {P}_{LHs}=f\left(\partial_{y}\right),
\end{equation}
or
\begin{equation}\label{2.17}
 {}_s\hat {P}_{LHs}=H^{-1}\left(\partial_{y}\right),
\end{equation}
respectively, where $\partial_{y}:= \frac{\partial}{\partial y}$.
\end{theorem}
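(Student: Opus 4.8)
The plan is to argue directly from the generating function \eqref{2.1b}, which I abbreviate as $G=G(x,y,z,t)=A(t)\,C_0\!\big(-x(H(t))^2\big)\exp\!\big(yH(t)+z(H(t))^r\big)$, and to recover the quasi-monomiality relations \eqref{mcap} and \eqref{pcap} by comparing coefficients of $t^n/n!$ on the two sides of a suitable operator identity. Two elementary observations drive the whole computation. First, since $y$ enters $G$ only through the factor $\exp(yH(t))$, one has $\partial_y G=H(t)\,G$, and therefore $\Phi(\partial_y)\,G=\Phi(H(t))\,G$ for every formal power series $\Phi$. Second, by \eqref{1.9} we have $H(t)=f^{-1}(t)$, so that $f(H(t))=t$ and $H'(t)=1/f'(H(t))$; together with \eqref{1.8}, i.e. $A(t)=1/g(H(t))$, these let me translate any function of $t$ into the corresponding function of $\partial_y$.

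For the derivative operator I would simply apply $f(\partial_y)$ to $G$. By the first observation this equals $f(H(t))\,G$, which is $t\,G$ by the second. Expanding both members as series in $t$ and matching the coefficient of $t^n/n!$ gives $f(\partial_y)\{{}_{{}_{s}LeG^{(r)}}s_{n}\}=n\,{}_{{}_{s}LeG^{(r)}}s_{n-1}$, which is precisely \eqref{pcap}; this proves \eqref{2.16}, and since $H^{-1}=f$ the equivalent form \eqref{2.17} is immediate.

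For the multiplicative operator I would compute $\partial_t G$ and show that it coincides with ${}_s\hat{M}_{LeGs}\,G$, after which matching coefficients of $t^n/n!$ yields \eqref{mcap}. Logarithmic differentiation produces four contributions. The factor $A(t)$ gives $A'(t)/A(t)$, which by $A=1/g(H)$ becomes $-\big(g'(\partial_y)/g(\partial_y)\big)\big(1/f'(\partial_y)\big)$. The Bessel-Tricomi factor is handled through its operational form $C_0\!\big(-x(H(t))^2\big)=\exp\!\big((H(t))^2 D_x^{-1}\big)\{1\}$ coming from \eqref{tricomi-exp}; its $t$-derivative contributes $2H(t)H'(t)D_x^{-1}$, and since $D_x^{-1}$ commutes with everything in $y$ this yields the operator $2D_x^{-1}\partial_y\big(1/f'(\partial_y)\big)$. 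Finally $\exp(yH+zH^r)$ contributes $yH'(t)$ and $rz(H(t))^{r-1}H'(t)$, that is $y\big(1/f'(\partial_y)\big)$ and $rz\,\partial_y^{r-1}\big(1/f'(\partial_y)\big)$. Using $H'(t)=1/f'(H(t))$ throughout to factor out the common right-hand operator $1/f'(\partial_y)$, the sum of the four contributions is exactly ${}_s\hat{M}_{LeGs}$ as displayed in \eqref{2.14}; the reformulation \eqref{2.15} then follows by re-expressing $g$ and $f$ in terms of $A$ and $H$ through \eqref{1.8}--\eqref{1.9}.

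The main obstacle I anticipate lies in the multiplicative operator, and specifically in two bookkeeping points. The operators $y$ and $D_x^{-1}$ (acting on $x$) do not commute with functions of $\partial_y$, so a definite ordering must be maintained: every factor $1/f'(\partial_y)$ has to be read as acting first, i.e. sitting on the right, which is exactly how it emerges from $\partial_y G=H(t)G$; the grouping in \eqref{2.14} is therefore essential and must be tracked with care. The Bessel-Tricomi factor also cannot be differentiated naively in $t$, and the clean route is to pass to the exponential operator representation so that $\partial_t$ brings down $D_x^{-1}$ acting on the same vacuum $\{1\}$, which can then be pulled to the front because $D_x^{-1}$ commutes with all $y$-operations. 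Once these orderings are fixed, identifying $\partial_t G$ with ${}_s\hat{M}_{LeGs}\,G$ reduces to routine collection of the four terms.
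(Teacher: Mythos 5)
Your proposal is correct and takes essentially the same route as the paper: both arguments differentiate the generating function with respect to $t$, use the eigenvalue identities $\partial_y G=f^{-1}(t)\,G$ and $f(\partial_y)G=tG$ to convert functions of $t$ into operators in $\partial_y$, and then match coefficients of $t^n/n!$ against \eqref{mcap} and \eqref{pcap}. The only cosmetic difference is that the paper obtains the block $y+2D_x^{-1}\partial_y+rz\,\partial_y^{r-1}$ ready-made by differentiating the form \eqref{2.3} built on the LeGHP multiplicative operator \eqref{leghpcap1}, whereas you recover the same three terms (including the Bessel--Tricomi contribution via \eqref{tricomi-exp}) by logarithmic differentiation of the explicit product, with the same correct attention to keeping $\frac{1}{f'(\partial_y)}$ on the right.
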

\begin{proof}
Consider the following identity:
\begin{equation}\label{dy}
\partial_{y}~\left\{\exp \left(yf^{-1}(t)+z\big(f^{-1}(t)\big)^r\right)\right\}=f^{-1}(t)~\exp \left(yf^{-1}(t)+z\big(f^{-1}(t)\big)^r\right).
\end{equation}

Since $f^{-1}$ denotes the compositional inverse of the function $f$ and $f(t)$ has an expansion (1.3a) in powers of $t$, therefore we have
\begin{equation}\label{dy2}
f\left(\partial_{y}\right)~\left\{\exp\left(yf^{-1}(t)+z\big(f^{-1}(t)\big)^r\right)\right\}=t~\exp\left(yf^{-1}(t)+z\big(f^{-1}(t)\big)^r\right).
\end{equation}

Differentiating equation \eqref{2.3} partially with respect to $t$ and in view of relation \eqref{2.8}, we find
\begin{equation}
\left(\left({}_s\hat {M}_{LeG}-\frac{g'\left(f^{-1}(t)\right)}{g\left(f^{-1}(t)\right)}\right) \frac{1}{f'(f^{-1}(t))}\right)\frac {1}{g\big(f^{-1}(t)\big)} \exp{\big({}_s\hat {M}_{LeG} f^{-1}(t)\big)}=\sum _{n=0}^{\infty }{}_{{}_{s}LeG^{(r)}}s_{n+1}(x,y,z)\frac{t^{n} }{n!},
\end{equation}
which on using monomiality principle equation \eqref{1.15} with $t=f^{-1}(t)$ gives
\begin{equation}
\aligned
\left(\left(\hat {M}_{LeG}-\frac{g'\left(f^{-1}(t)\right)}{g\left(f^{-1}(t)\right)}\right) \frac{1}{f'(f^{-1}(t))}\right)&\frac {1}{g\big(f^{-1}(t)\big)}C_0\left(-x\big(f^{-1}(t)\big)^2\right)\exp\left(yf^{-1}(t)+z\big(f^{-1}(t)\big)^r\right)\\
=&\sum _{n=0}^{\infty }{}_{{}_{s}LeG^{(r)}}s_{n+1}(x,y,z)\frac{t^{n} }{n!}.
\endaligned
\end{equation}

Since $g(t)$ is an invertible series and $f(t)$ is a delta series of $t$ therefore $\frac{g'\left(f^{-1}(t)\right)}{g\left(f^{-1}(t)\right)}$ and $\frac{1}{f'(f^{-1}(t))}$ possess power series expansions of $f^{-1}(t)$. Thus, in view of relation \eqref{dy}, the above equation becomes
\begin{equation}
\aligned
\left(\left({}_s\hat {M}_{LeG}-\frac{g'\left(\partial_{y}\right)}{g\left(\partial_{y}\right)}\right) \frac{1}{f'(\partial_{y})}\right)&\left\{\frac {1}{g\big(f^{-1}(t)\big)}C_0\left(-x\big(f^{-1}(t)\big)^2\right)\exp\left(yf^{-1}(t)+z\big(f^{-1}(t)\big)^r\right)\right\}\\
=&\sum _{n=0}^{\infty }{}_{{}_{s}LeG^{(r)}}s_{n+1}(x,y,z)\frac{t^{n} }{n!},
\endaligned
\end{equation}
which on using generating function \eqref{2.1a} becomes
\begin{equation}
\left(\left({}_s\hat {M}_{LeG}-\frac{g'\left(\partial_{y}\right)}{g\left(\partial_{y}\right)}\right) \frac{1}{f'(\partial_{y})}\right)\left \{\sum _{n=0}^{\infty }{}_{{}_{s}LeG^{(r)}}s_{n}(x,y,z)\frac{t^{n} }{n!}\right \}=\sum _{n=0}^{\infty }{}_{{}_{s}LeG^{(r)}}s_{n+1}(x,y,z)\frac{t^{n} }{n!},
\end{equation}
or, in an equal manner, it becomes
\begin{equation}
\sum _{n=0}^{\infty}\left(\left({}_s\hat {M}_{LeG}-\frac{g'\left(\partial_{y}\right)}{g\left(\partial_{y}\right)}\right) \frac{1}{f'(\partial_{y})}\right)\left\{{}_{{}_{s}LeG^{(r)}}s_{n}(x,y,z)\right\}\frac{t^{n} }{n!}=\sum _{n=0}^{\infty }{}_{{}_{s}LeG^{(r)}}s_{n+1}(x,y,z)\frac{t^{n} }{n!}.
\end{equation}
Now, equating the coefficients of like powers of $t$ in the above equation, we find
\begin{equation}
\left(\left({}_s\hat {M}_{LeG}-\frac{g'\left(\partial_{y}\right)}{g\left(\partial_{y}\right)}\right) \frac{1}{f'(\partial_{y})}\right)\{{}_{{}_{s}LeG^{(r)}}s_{n}(x,y,z)\}={}_{{}_{s}LeG^{(r)}}s_{n+1}(x,y,z),
\end{equation}
which, in view of equation \eqref{mcap} shows that the multiplicative operator for ${}_{{}_{s}LeG^{(r)}}s_{n}(x,y,z)$ is given as:
$${}_s\hat{M}_{LeGs}=\left({}_s\hat {M}_{LeG}-\frac{g'\left(\partial_{y}\right)}{g\left(\partial_{y}\right)}\right) \frac{1}{f'(\partial_{y})}.$$

Finally, using equation \eqref{leghpcap1} in the r.h.s of above equation, we get assertion \eqref{2.9}. \\

Again, in view of identity \eqref{dy2}, we have
\begin{equation}
\aligned
f \left(\partial_{y}\right)& \left\{\frac {1}{g\big(f^{-1}(t)\big)}C_0\left(-x\big(f^{-1}(t)\big)^2\right)\exp\left(yf^{-1}(t)+z\big(f^{-1}(t)\big)^r\right)\right\}\\
=&t \frac {1}{g\big(f^{-1}(t)\big)}C_0\left(-x\big(f^{-1}(t)\big)^2\right)\exp\left(yf^{-1}(t)+z\big(f^{-1}(t)\big)^r\right),
\endaligned
\end{equation}
which on using generating function \eqref{2.1a} becomes
\begin{equation}
f \left(\partial_{y}\right) \left \{\sum _{n=0}^{\infty }{}_{{}_{s}LeG^{(r)}}s_{n}(x,y,z)\frac{t^{n} }{n!}\right \}=\sum _{n=1}^{\infty }{}_{{}_{s}LeG^{(r)}}s_{n-1}(x,y,z)\frac{t^{n} }{(n-1)!},
\end{equation}
or
\begin{equation}
\sum _{n=0}^{\infty }f \left(\partial_{y}\right)  \{{}_{{}_{s}LeG^{(r)}}s_{n}(x,y,z)\} \frac{t^{n} }{n!}=\sum _{n=1}^{\infty }{{}_{{}_{s}LeG^{(r)}}s_{n-1}(x,y,z)}\frac{t^{n} }{(n-1)!}.
\end{equation}
Equating the coefficients of like powers of $t$ in the above equation, we get
\begin{equation}
f \left(\partial_{y}\right)  \{{}_{{}_{s}LeG^{(r)}}s_{n}(x,y,z)\}=n~{}_{{}_{s}LeG^{(r)}}s_{n}(x,y,z),\quad(n \ge 1),
\end{equation}
which in view of equation \eqref{pcap} yields assertion \eqref{2.16}.  Also, in view of relations \eqref{1.8} and \eqref{1.9}, assertions \eqref{2.14} and \eqref{2.16} can be expressed equivalently as equations \eqref{2.15} and \eqref{2.17}, respectively.
\end{proof}
\begin{theorem}
	The Legendre-Gould Hopper based Sheffer polynomials $\frac{{{}_{r}LeG^{(r)}}s_{n}(x,y,z)}{n!}$ are quasi-monomial under the action of the following multiplicative and derivative operators:
	\begin{equation}\label{2.30}
	{}_r\hat {M}_{LeGs}=\left(-D_x^{-1}+D_y^{-1}+rz\frac{\partial^{r-1}}{\partial y^{r-1}}-\frac {g^{\prime} \left(\partial_{y}\right)} {g\left(\partial_{y}\right)}\right)\frac {1} {f^{\prime}\left(\partial_{y}\right)},
	\end{equation}
	and
	\begin{equation}\label{2.31}
	 {}_r\hat {P}_{LHs}=f\left(\frac{-\partial}{\partial x}x\frac{\partial}{\partial_{y}}\right),
	\end{equation}
	respectively, where $\partial_{y}:= \frac{\partial}{\partial y}$.
\end{theorem}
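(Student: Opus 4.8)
The plan is to follow verbatim the strategy used for ${}_{{}_{s}LeG^{(r)}}s_{n}(x,y,z)$ in the previous theorem, but starting from the generating function \eqref{2.9} and using the base operators ${}_r\hat{M}_{LeG}$ and $\hat{P}_{RH}=-\frac{\partial}{\partial x}x\frac{\partial}{\partial x}$ of \eqref{leghpcap2}. It is convenient to set $q_n:=\frac{{}_{{}_{r}LeG^{(r)}}s_{n}(x,y,z)}{n!}$, since \eqref{2.9} then reads as the ordinary exponential generating function $\sum_{n\ge 0}q_n\frac{t^n}{n!}$; verifying \eqref{mcap} and \eqref{pcap} for $q_n$ is precisely the assertion of quasi-monomiality.

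The one structural change from the earlier proof is that the elementary relation \eqref{dy} must be replaced by the Bessel--Tricomi differential identity
\begin{equation}
-\frac{\partial}{\partial x}\,x\,\frac{\partial}{\partial x}\,C_0\left(x f^{-1}(t)\right)=f^{-1}(t)\,C_0\left(x f^{-1}(t)\right),
\end{equation}
which is verified termwise from the series \eqref{tricomi}. Since $\hat{P}_{RH}$ differentiates only in $x$, it leaves the factors $C_0(-yf^{-1}(t))$ and $\exp(z(f^{-1}(t))^r)$ intact, so acting with it on the left-hand side of \eqref{2.9} multiplies the whole product by $f^{-1}(t)$; consequently $f(\hat{P}_{RH})$ reproduces the prefactor $f(f^{-1}(t))=t$. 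Substituting the generating function into this relation, expanding, and comparing coefficients of $\frac{t^n}{n!}$ gives $f(\hat{P}_{RH})\{q_n\}=n\,q_{n-1}$, which is \eqref{pcap} and yields the derivative operator \eqref{2.31}, the operator being understood to act through the variable $x$.

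For the multiplicative operator I would differentiate the operational identity $\frac{1}{g(f^{-1}(t))}\exp({}_r\hat{M}_{LeG}f^{-1}(t))=\sum_n s_n({}_r\hat{M}_{LeG})\frac{t^n}{n!}$ with respect to $t$. Using $\frac{d}{dt}f^{-1}(t)=\frac{1}{f'(f^{-1}(t))}$, the left-hand side derivative factors as $\left({}_r\hat{M}_{LeG}-\frac{g'(f^{-1}(t))}{g(f^{-1}(t))}\right)\frac{1}{f'(f^{-1}(t))}$ applied to the same exponential, while the right-hand side becomes $\sum_n q_{n+1}\frac{t^n}{n!}$. As $g$ is invertible and $f$ is a delta series, $\frac{g'(f^{-1}(t))}{g(f^{-1}(t))}$ and $\frac{1}{f'(f^{-1}(t))}$ are power series in $f^{-1}(t)$, and by the identity above each occurrence of $f^{-1}(t)$ may be replaced by the operator $\hat{P}_{RH}$ acting on \eqref{2.9}. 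Comparing coefficients of $\frac{t^n}{n!}$ then gives $\left({}_r\hat{M}_{LeG}-\frac{g'(\hat{P}_{RH})}{g(\hat{P}_{RH})}\right)\frac{1}{f'(\hat{P}_{RH})}\{q_n\}=q_{n+1}$, which is \eqref{mcap}; inserting ${}_r\hat{M}_{LeG}=-D_x^{-1}+D_y^{-1}+rz\frac{\partial^{r-1}}{\partial y^{r-1}}$ produces the multiplicative operator \eqref{2.30}.

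The main obstacle is the careful handling of $\hat{P}_{RH}=-\frac{\partial}{\partial x}x\frac{\partial}{\partial x}$: unlike the plain first-order $\partial_y$ of the Legendre--Gould--Hopper case, it is a second-order operator, so its eigen-type action on $C_0(xf^{-1}(t))$ must be checked termwise, and in the multiplicative operator it appears as the argument of the formal series $g'/g$ and $1/f'$. Justifying the substitution of the scalar $f^{-1}(t)$ by the operator $\hat{P}_{RH}$ inside those series is the delicate point; once this single identity is secured, the remaining steps are the same formal coefficient comparisons as in the preceding theorem.
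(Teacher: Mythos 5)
The paper states this theorem without proof (it is followed immediately by a remark), so the relevant benchmark is the paper's proof of the preceding theorem for ${}_{{}_{s}LeG^{(r)}}s_{n}(x,y,z)$, and your proposal is precisely the intended adaptation of that argument: normalize by $n!$ so that \eqref{2.9} becomes an ordinary exponential generating function, replace the identity \eqref{dy} by the eigenvalue relation $-\partial_x x\partial_x\,C_0\big(xf^{-1}(t)\big)=f^{-1}(t)\,C_0\big(xf^{-1}(t)\big)$ (which your termwise check from \eqref{tricomi} correctly establishes), and then run the same two coefficient comparisons after differentiating the operational identity in $t$. At the formal-operational level at which the paper works, this is correct.

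One discrepancy deserves attention. Your derivation of the multiplicative operator necessarily produces
\begin{equation*}
\left({}_r\hat{M}_{LeG}-\frac{g'\left(\hat{P}_{RH}\right)}{g\left(\hat{P}_{RH}\right)}\right)\frac{1}{f'\left(\hat{P}_{RH}\right)},\qquad \hat{P}_{RH}=-\frac{\partial}{\partial x}\,x\,\frac{\partial}{\partial x},
\end{equation*}
because the only operator that acts on the generating function \eqref{2.9} as multiplication by $f^{-1}(t)$ is $\hat{P}_{RH}$; plain $\partial_y$ does not, since the $y$-dependence now sits in $C_0(-yf^{-1}(t))$ rather than in $\exp\big(yf^{-1}(t)\big)$. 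So you have not literally derived \eqref{2.30} as printed, which carries $g'(\partial_y)/g(\partial_y)$ and $1/f'(\partial_y)$. This is a defect of the statement rather than of your proof: the paper's own specializations of this theorem in Section 3 (e.g.\ Tables 2 and 3) display the argument as $-\partial_x x\partial_x$, and \eqref{2.31} itself contains the analogous misprint $\frac{\partial}{\partial_y}$ for $\frac{\partial}{\partial x}$. You should say explicitly that the operator you obtain is this corrected form, rather than claiming the computation "produces \eqref{2.30}" as written.
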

\begin{remark}
In view of equation \eqref{1.14} and using equations \eqref{2.9} and \eqref{2.10}, we deduce the following consequence of Theorem 2.2.
\end{remark}
\begin{corollary}
The Legendre-Gould Hopper based Sheffer polynomials ${}_{{}_{s}LeG^{(r)}}s_{n-1}(x,y,z)$ have the following explicit representations:
\begin{equation*}
{}_{{}_{s}LeG^{(r)}}s_{n}(x,y,z)={}_s\hat {M}_{LeGs}^n\{1\}
\end{equation*}
that is
\begin{equation}
{}_{{}_{s}LeG^{(r)}}s_{n}(x,y,z)=\left(\left(y+2D_x^{-1}\frac{\partial}{\partial y}+rz\frac{\partial^{r-1}}{\partial y^{r-1}}-\frac {g^{\prime} \left(\partial_{y}\right)} {g\left(\partial_{y}\right)}\right)\frac {1} {f^{\prime}\left(\partial_{y}\right)}\right)^{n}\{1\},\end{equation}
or,
\begin{equation}
{}_{{}_{s}LeG^{(r)}}s_{n}(x,y,z)=\left(\left(y+2D_x^{-1}\frac{\partial}{\partial y}+rz\frac{\partial^{r-1}}{\partial y^{r-1}}\right)H'\left(H^{-1}\left(\partial_{y}\right)\right)+\frac {A^{\prime} \left(H^{-1}\left(\partial_{y}\right)\right)}{A\left(H^{-1}\left(\partial_{y}\right)\right)}\right)^{n}\{1\}.
\end{equation}
\end{corollary}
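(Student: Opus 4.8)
The plan is to obtain the corollary as an immediate consequence of the quasi-monomiality of ${}_{{}_{s}LeG^{(r)}}s_{n}(x,y,z)$ established in the theorem that produced the operators \eqref{2.14}--\eqref{2.17}, combined with the general monomiality identity \eqref{1.14}. Concretely, I would proceed in three steps: first confirm the vacuum normalization ${}_{{}_{s}LeG^{(r)}}s_{0}(x,y,z)=1$; then iterate the multiplicative relation \eqref{mcap}; and finally substitute the two equivalent explicit forms of the multiplicative operator to read off the two displayed representations.

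First I would verify the vacuum by setting $t=0$ in the generating function \eqref{2.1b}. Since $f$ is a delta series we have $f(0)=0$, hence $H(0)=f^{-1}(0)=0$; moreover $C_{0}(0)=1$ by \eqref{tricomi}, and $\exp(0)=1$. Under the normalization $g(0)=1$, so that $A_{0}=1/g(0)=1$ in \eqref{1.2}, this is consistent with the vacuum convention $p_{0}(x)=1$ adopted after \eqref{p,m}. Evaluating \eqref{2.1b} at $t=0$ therefore gives
\[
{}_{{}_{s}LeG^{(r)}}s_{0}(x,y,z)=A(0)\,C_{0}(0)\,\exp(0)=1.
\]

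Next, the theorem yielding \eqref{2.14} shows that ${}_{{}_{s}LeG^{(r)}}s_{n}(x,y,z)$ satisfies the multiplicative rule \eqref{mcap}, namely
\[
{}_s\hat{M}_{LeGs}\bigl\{{}_{{}_{s}LeG^{(r)}}s_{n}(x,y,z)\bigr\}={}_{{}_{s}LeG^{(r)}}s_{n+1}(x,y,z).
\]
Applying this relation recursively, starting from the vacuum ${}_{{}_{s}LeG^{(r)}}s_{0}=1$, gives by induction on $n$ the closed form
\[
{}_{{}_{s}LeG^{(r)}}s_{n}(x,y,z)={}_s\hat{M}_{LeGs}^{\,n}\{1\},
\]
which is precisely the specialization of \eqref{1.14} to the present family. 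Substituting the explicit expression \eqref{2.14} for ${}_s\hat{M}_{LeGs}$ then produces the first displayed representation, and substituting the equivalent form \eqref{2.15}---equivalent by virtue of relations \eqref{1.8} and \eqref{1.9}---produces the second.

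The only point demanding care, and hence the main obstacle, is the operator bookkeeping: since the constituent factors $D_{x}^{-1}$, $\partial_{y}$, $g'(\partial_{y})/g(\partial_{y})$ and $1/f'(\partial_{y})$ do not all commute, the power ${}_s\hat{M}_{LeGs}^{\,n}$ must be understood as the ordered composition dictated by the recursion, with the factors written in exactly the order in which ${}_s\hat{M}_{LeGs}$ acts on $1$. Because each recursive step applies the same fixed operator, no spurious reordering is introduced and the representation is unambiguous once the operator is fixed. The subsidiary fact that $1/f'(\partial_{y})$ and the logarithmic-derivative factor $g'(\partial_{y})/g(\partial_{y})$ admit the required power-series interpretation as operators has already been supplied in the proof of the preceding theorem and may be invoked verbatim.
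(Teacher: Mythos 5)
Your proposal is correct and follows essentially the same route as the paper, which deduces the corollary (via Remark 2.1) from the monomiality identity \eqref{1.14} together with the multiplicative operators \eqref{2.14}--\eqref{2.15} of the quasi-monomiality theorem; your explicit verification of the vacuum ${}_{{}_{s}LeG^{(r)}}s_{0}(x,y,z)=1$ and the remark on operator ordering merely make explicit what the paper leaves implicit. (Incidentally, your careful substitution would reveal that the corollary's second display places the $A'/A$ term as a standalone summand, consistent with \eqref{2.14} since $\frac{A'(H^{-1}(\partial_y))}{A(H^{-1}(\partial_y))}=-\frac{g'(\partial_y)}{g(\partial_y)}\frac{1}{f'(\partial_y)}$, whereas \eqref{2.15} as printed multiplies it by $H'\left(H^{-1}(\partial_y)\right)$ --- a typographical inconsistency in the paper, not in your argument.)
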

\begin{theorem}
The Legendre-Gould Hopper based Sheffer polynomials ${}_{{}_{s}LeG^{(r)}}s_{n}(x,y,z)$ and $\frac{{{}_{r}LeG^{(r)}}s_{n}(x,y,z)}{n!}$ satisfy the following respective differential equation:
\begin{equation}\label{2.34}
\left(\left(y+2D_x^{-1}\frac{\partial}{\partial y}+rz\frac{\partial^{r-1}}{\partial y^{r-1}}-\frac {g^{\prime} \left(\partial_{y}\right)} {g\left(\partial_{y}\right)}\right)\frac {f \left(\partial_{y}\right)} {f^{\prime}\left(\partial_{y}\right)}-n\right){}_{{}_{s}LeG^{(r)}}s_{n}(x,y,z)=0
\end{equation}
and
\begin{equation}\label{2.35}
\aligned
\left(\left(-D_x^{-1}+D_y^{-1}+rz\frac{\partial^{r-1}}{\partial y^{r-1}}-\frac {g^{\prime} \left(\partial_{y}\right)} {g\left(\partial_{y}\right)}\right)\frac {f\left(\frac{-\partial}{\partial x}x\frac{\partial}{\partial_{y}}\right)} {f^{\prime}\left(\partial_{y}\right)}-n\right){}_{{}_{r}LeG^{(r)}}s_{n}(x,y,z)=0.
\endaligned
\end{equation}
\end{theorem}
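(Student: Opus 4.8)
The plan is to obtain both differential equations directly from the quasi-monomiality established in the two preceding theorems, via the combined identity \eqref{mpcap}, namely $\hat M\hat P\{p_n\}=n\,p_n$. For any quasi-monomial family this relation holds verbatim: the composition of the multiplicative and derivative operators is the ``number operator'', whose eigenvalue on the $n$-th polynomial is $n$. Thus the entire argument reduces to substituting the explicit operator expressions found earlier and rearranging the composition $\hat M\hat P$ into the compact fraction displayed inside \eqref{2.34} and \eqref{2.35}.

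For the family ${}_{{}_{s}LeG^{(r)}}s_{n}(x,y,z)$ I would take $\hat M={}_s\hat M_{LeGs}$ from \eqref{2.14} and $\hat P={}_s\hat P_{LHs}=f(\partial_y)$ from \eqref{2.16}. Forming $\hat M\hat P$ places the factor $f(\partial_y)$ to the right of the factor $\tfrac{1}{f'(\partial_y)}$ sitting inside $\hat M$; since both are functions of the single operator $\partial_y$ they commute, so the two factors coalesce into $\tfrac{f(\partial_y)}{f'(\partial_y)}$ and the composition becomes precisely the bracketed operator of \eqref{2.34}. Invoking \eqref{mpcap} then gives $\hat M\hat P\{{}_{{}_{s}LeG^{(r)}}s_n\}=n\,{}_{{}_{s}LeG^{(r)}}s_n$, and transposing $n$ to the left yields the annihilation statement \eqref{2.34}.

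For the family $\tfrac{{{}_{r}LeG^{(r)}}s_{n}(x,y,z)}{n!}$ the reasoning is structurally identical, now with $\hat M={}_r\hat M_{LeGs}$ from \eqref{2.30} and $\hat P={}_r\hat P_{LHs}=f\!\left(-\tfrac{\partial}{\partial x}x\tfrac{\partial}{\partial y}\right)$ from \eqref{2.31}. The only point requiring a moment's care is the merging of the two $f$-type factors: the numerator operator $f\!\left(-\tfrac{\partial}{\partial x}x\tfrac{\partial}{\partial y}\right)$ coming from $\hat P$ must be slid past the factor $\tfrac{1}{f'(\partial_y)}$ coming from $\hat M$. This is legitimate because $\partial_y$ commutes separately with each of $x$, $\partial_x$ and $\partial_y$, hence with the whole argument $-\tfrac{\partial}{\partial x}x\tfrac{\partial}{\partial y}$; consequently $\tfrac{1}{f'(\partial_y)}$ commutes with $\hat P$. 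After this commutation the composition reads exactly the bracketed operator of \eqref{2.35}, and \eqref{mpcap} again delivers the eigenvalue $n$, giving \eqref{2.35}.

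The main, and essentially the only, obstacle is the bookkeeping of operator order: one must confirm that the factor $\tfrac{1}{f'(\partial_y)}$ built into each multiplicative operator genuinely commutes with the derivative operator before collapsing the product into a single fraction. For the first family this is automatic, since every factor is a function of $\partial_y$; for the second family it rests on the elementary commutation of $\partial_y$ with $x$, $\partial_x$ and $\partial_y$, which is readily verified. Beyond the monomiality relations already proved, no estimates, convergence considerations, or additional identities are needed.
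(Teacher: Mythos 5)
Your proposal is correct and follows exactly the paper's own (one-line) argument: substitute the multiplicative and derivative operators from \eqref{2.14}--\eqref{2.16} and \eqref{2.30}--\eqref{2.31} into the monomiality identity \eqref{mpcap}, $\hat M\hat P\{p_n\}=n\,p_n$. Your additional care in checking that $\tfrac{1}{f'(\partial_y)}$ commutes with the derivative operator before collapsing the product into a single fraction is a detail the paper leaves implicit, but it is the same proof.
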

\noindent
{\bf{Proof.}} Using equations \eqref{2.14} and \eqref{2.16} in equation \eqref{mpcap}, we get assertion \eqref{2.34} and similarly using equations \eqref{2.30} and \eqref{2.31} in equation \eqref{mpcap}, we get assertion \eqref{2.35}.
\begin{remark}
Since the Sheffer sequence $s_{n}(x)$ for $g(t)=1$ becomes the associated Sheffer sequence ${\mathfrak s}_{n}(x)$ for $f(t)$. (For our convenience, we denote the associated Sheffer sequence by ${\mathfrak s}_{n}(x)$). Therefore, for $g(t)=1$, we deduce the following consequences of Theorems 2.1-2.5:
\end{remark}
\begin{corollary}
The Legendre-Gould Hopper based associated Sheffer polynomials (LeGHASP) ${}_{{}_{s}LeG^{(r)}}\mathfrak{s}_{n}(x,y,z)$ and $\frac{{{}_{r}LeG^{(r)}}s_{n}(x,y,z)}{n!}$ are defined by the generating function respectively as
\begin{equation}
C_0\left(-x\big(f^{-1}(t)\big)^{2}\right)\exp\left(yf^{-1}(t)+z\big(f^{-1}(t)\big)^r\right)=\sum_{n=0}^{\infty}{}_{{}_{s}LeG^{(r)}}\mathfrak{s}_{n}(x,y,z)\frac{t^n}{n!},
\end{equation}
or
\begin{equation}
C_0\left(-x\big(H(t)\big)^2\right)\exp\left(yH(t)+z\big(H(t)\big)^r\right)=\sum_{n=0}^{\infty}{}_{{}_{s}LeG^{(r)}}\mathfrak{s}_{n}(x,y,z)\frac{t^n}{n!}
\end{equation}
and
\begin{equation}
C_0\left(xf^{-1}(t)\right)C_{0}(-yf^{-1}(t))\exp\left(z\big(f^{-1}(t)\big)^r\right)=\sum_{n=0}^{\infty}{}_{{}_{r}LeG^{(r)}}\mathfrak{r}_{n}(x,y,z)\frac{t^n}{(n!)^{2}},
\end{equation}
or
\begin{equation}
C_0\left(xH(t)\right)C_{0}(-yH(t))\exp\left(z\big(H(t)\big)^r\right)=\sum_{n=0}^{\infty}{}_{{}_{r}LeG^{(r)}}\mathfrak{r}_{n}(x,y,z)\frac{t^n}{(n!)^{2}}.
\end{equation}
\end{corollary}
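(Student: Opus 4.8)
The plan is to obtain this corollary as a direct specialization of Theorems 2.1 and 2.2 to the case $g(t)=1$, exactly as anticipated in Remark 2.2. Recall from the defining relation \eqref{1.6} that setting $g(t)=1$ reduces the Sheffer sequence $s_{n}(x)\sim(g(t),f(t))$ to the associated sequence $\mathfrak{s}_{n}(x)\sim(1,f(t))$, and that the constant series $1$ is trivially invertible, so every hypothesis under which Theorems 2.1 and 2.2 were established continues to hold. Consequently, I would carry the substitution $g(t)=1$ through the two generating functions already proved and merely relabel the resulting polynomial families as the associated versions $\mathfrak{s}_{n}$ and $\mathfrak{r}_{n}$.

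For the ${}_{{}_{s}LeG^{(r)}}$ family I would start from the generating function \eqref{2.1a}. With $g(t)=1$ the prefactor $\tfrac{1}{g(f^{-1}(t))}$ collapses to $1$, so the left-hand side becomes $C_0(-x(f^{-1}(t))^2)\exp(yf^{-1}(t)+z(f^{-1}(t))^r)$; denoting the corresponding coefficient sequence by ${}_{{}_{s}LeG^{(r)}}\mathfrak{s}_{n}(x,y,z)$ yields the first assertion. The equivalent $H(t)$-form then follows at once from \eqref{2.1b}: by \eqref{1.8} we have $A(t)=\tfrac{1}{g(f^{-1}(t))}=1$, and \eqref{1.9} gives $H(t)=f^{-1}(t)$, so the second assertion is simply \eqref{2.1b} read with $A(t)=1$.

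The ${}_{{}_{r}LeG^{(r)}}$ family is handled identically, now starting from \eqref{2.9} and \eqref{2.10}: setting $g(t)=1$ (equivalently $A(t)=1$) removes the invertible prefactor and, after relabeling the coefficient sequence as ${}_{{}_{r}LeG^{(r)}}\mathfrak{r}_{n}(x,y,z)$, produces the last two generating functions. There is no substantive obstacle here, since the argument is purely a reading-off of the $g=1$ case; the only point deserving a line of justification is that this relabeling is legitimate, i.e. that the operationally defined object ${}_{{}_{s}LeG^{(r)}}s_{n}=s_{n}({}_s\hat{M}_{LeG})$ from \eqref{2.8} specializes, under $g=1$, to $\mathfrak{s}_{n}({}_s\hat{M}_{LeG})$. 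This is immediate because the umbral substitution $x\mapsto {}_s\hat{M}_{LeG}$ does not involve $g$, so replacing $s_{n}$ by its associated-sequence specialization $\mathfrak{s}_{n}$ commutes with applying the multiplicative operator, and the same remark applies verbatim to the $\mathfrak{r}_{n}$ case.
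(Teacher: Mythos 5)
Your proposal is correct and follows essentially the same route as the paper, which obtains this corollary precisely by specializing Theorems 2.1 and 2.2 to $g(t)=1$ (equivalently $A(t)=1$), as flagged in the remark preceding the corollary. Your additional observation that the umbral substitution $x\mapsto{}_s\hat{M}_{LeG}$ does not involve $g$, so the relabeling $s_n\mapsto\mathfrak{s}_n$ is legitimate, is a small extra justification the paper leaves implicit but does not change the argument.
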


\begin{corollary}
The Legendre-Gould Hopper based associated Sheffer polynomials ${}_{{}_{s}LeG^{(r)}}\mathfrak{s}_{n}(x,y,z)$ and $\frac{{{}_{r}LeG^{(r)}}s_{n}(x,y,z)}{n!}$ are quasi-monomial under the action of the following multiplicative and derivative operators:
\begin{equation}
{}_s\hat {M}_{LeGs}=\left(y+2D_x^{-1}\frac{\partial}{\partial y}+rz\frac{\partial^{r-1}}{\partial y^{r-1}}\right)\frac {1} {f^{\prime}\left(\partial_{y}\right)},
\end{equation}
\begin{equation}
\hat {P}_{LH{\mathfrak s}}=f\left(\partial_{y}\right)
\end{equation}
and
\begin{equation}
{}_r\hat {M}_{LeGs}=\left(-D_x^{-1}+D_y^{-1}+rz\frac{\partial^{r-1}}{\partial y^{r-1}}\right)\frac {1} {f^{\prime}\left(\partial_{y}\right)},
\end{equation}
\begin{equation}
{}_r\hat {P}_{LHs}=f\left(\frac{-\partial}{\partial x}x\frac{\partial}{\partial_{y}}\right),
\end{equation}

respectively.
\end{corollary}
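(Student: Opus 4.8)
\emph{Proof plan.} The statement is a specialization of the quasi-monomiality theorems for the general Legendre--Gould--Hopper based Sheffer families (Theorems 2.3 and 2.4) to the case $g(t)=1$, exactly the reduction announced in Remark 2.2. The plan is therefore to justify that taking $g\equiv 1$ is legitimate within the Sheffer framework and then to track how each established operator simplifies. Recall that $g(t)=1$ is an invertible series (since $g(0)=1\neq 0$), so $s_n(x)\sim(g(t),f(t))$ degenerates to the associated sequence $\mathfrak{s}_n(x)\sim(1,f(t))$; moreover, by \eqref{1.8} the prefactor becomes $A(t)=1/g(f^{-1}(t))=1$, which is precisely how the generating functions of the associated families in the preceding Corollary were obtained.

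First I would treat the family ${}_{{}_{s}LeG^{(r)}}\mathfrak{s}_{n}(x,y,z)$, starting from the multiplicative operator \eqref{2.14} and the derivative operator \eqref{2.16} of Theorem 2.3. The series $g$ enters \eqref{2.14} only through the logarithmic-derivative term $g'(\partial_y)/g(\partial_y)$; with $g(t)=1$ one has $g'(t)\equiv 0$, so this term vanishes identically as an operator, and \eqref{2.14} collapses to
\[
{}_s\hat{M}_{LeGs}=\left(y+2D_x^{-1}\frac{\partial}{\partial y}+rz\frac{\partial^{r-1}}{\partial y^{r-1}}\right)\frac{1}{f'(\partial_y)}.
\]
The derivative operator \eqref{2.16}, namely $f(\partial_y)$, does not involve $g$ and hence carries over unchanged. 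Because the defining relations \eqref{mcap}--\eqref{pcap} hold for the general-$g$ operators acting on $s_n$, they continue to hold for the specialized operators acting on $\mathfrak{s}_n$, so the quasi-monomiality is inherited with no extra work.

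The identical argument applied to Theorem 2.4 handles the family $\frac{{}_{{}_{r}LeG^{(r)}}\mathfrak{r}_{n}(x,y,z)}{n!}$: setting $g\equiv 1$ annihilates the $g'(\partial_y)/g(\partial_y)$ summand in \eqref{2.30}, leaving the asserted ${}_r\hat{M}_{LeGs}$, while the derivative operator \eqref{2.31} is $g$-free and is reproduced verbatim. This completes the reduction for both families.

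Should a self-contained proof be preferred, one can bypass the general-$g$ theorems entirely by repeating their derivation directly from the associated generating functions of the preceding Corollary: differentiate with respect to $t$, use the analogue of \eqref{dy} to convert $f^{-1}(t)$ into $\partial_y$ acting on the exponential factor, and equate coefficients of $t^n/n!$ to read off the multiplicative operator; applying $f(\partial_y)$ and invoking \eqref{dy2} yields the derivative operator. No step presents a genuine obstacle here---the only point that needs to be checked is that the logarithmic-derivative contribution is truly absent, which is immediate from $A(t)=1$---so the corollary is in effect a bookkeeping consequence of the machinery already in place.
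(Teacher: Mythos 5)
Your proposal matches the paper's own route exactly: the paper offers no separate proof, deriving this corollary (per Remark 2.6) by setting $g(t)=1$ in Theorems 2.3 and 2.4 so that the $g'(\partial_y)/g(\partial_y)$ term drops out of \eqref{2.14} and \eqref{2.30} while the $g$-free derivative operators \eqref{2.16} and \eqref{2.31} persist unchanged. Your specialization argument is correct and essentially identical to the paper's.
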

\begin{corollary}
The Legendre-Gould Hopper based associated Sheffer polynomials ${}_{{}_{s}LeG^{(r)}}\mathfrak{s}_{n}(x,y,z)$ and $\frac{{{}_{r}LeG^{(r)}}s_{n}(x,y,z)}{n!}$ satisfy the differential equations
\begin{equation}
\left(\left(y+2D_x^{-1}\frac{\partial}{\partial y}+rz\frac{\partial^{r-1}}{\partial y^{r-1}}\right)\frac {f \left(\partial_{y}\right)} {f^{\prime}\left(\partial_{y}\right)}-n\right){}_{{}_{s}LeG^{(r)}}\mathfrak{s}_{n}(x,y,z)=0
\end{equation}

and
\begin{equation}
\left(\left(-D_x^{-1}+D_y^{-1}+rz\frac{\partial^{r-1}}{\partial y^{r-1}}\right)\frac {f\left(\frac{-\partial}{\partial x}x\frac{\partial}{\partial_{y}}\right)} {f^{\prime}\left(\partial_{y}\right)}-n\right){}_{{}_{r}LeG^{(r)}}\mathfrak{r}_{n}(x,y,z)=0
\end{equation}
respectively.
\end{corollary}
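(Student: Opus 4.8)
The plan is to read this corollary off Theorem 2.5 as the degenerate case $g(t)=1$, which by Remark 2.2 is exactly the instance that converts the Sheffer families into the associated Sheffer families ${}_{{}_{s}LeG^{(r)}}\mathfrak{s}_{n}(x,y,z)$ and $\frac{{}_{{}_{r}LeG^{(r)}}\mathfrak{r}_{n}(x,y,z)}{n!}$. Since $g(t)=1$ is a (trivially) invertible series, every hypothesis invoked in the derivation of \eqref{2.34} and \eqref{2.35} still holds, so those two differential equations remain valid once $g$ is set equal to $1$; the only remaining task is then to simplify the operators accordingly.

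First I would record that $g(t)=1$ forces $g'(t)=0$, whence the logarithmic-derivative term $\dfrac{g^{\prime}(\partial_{y})}{g(\partial_{y})}$ occurring in both \eqref{2.34} and \eqref{2.35} vanishes identically. Dropping that term from \eqref{2.34} leaves the operator $\left(y+2D_x^{-1}\frac{\partial}{\partial y}+rz\frac{\partial^{r-1}}{\partial y^{r-1}}\right)\dfrac{f(\partial_{y})}{f^{\prime}(\partial_{y})}-n$, which is precisely the operator annihilating ${}_{{}_{s}LeG^{(r)}}\mathfrak{s}_{n}(x,y,z)$ in the first asserted equation; the same deletion in \eqref{2.35} produces the second. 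An equivalent self-contained route is to take the multiplicative and derivative operators listed for the associated families in Corollary 2.3 and feed them into the monomiality identity \eqref{mpcap}, $\hat{M}\hat{P}\{p_{n}\}=n\,p_{n}$: for the $s$-type this gives $\left(y+2D_x^{-1}\frac{\partial}{\partial y}+rz\frac{\partial^{r-1}}{\partial y^{r-1}}\right)\dfrac{1}{f^{\prime}(\partial_{y})}\,f(\partial_{y})$ applied to ${}_{{}_{s}LeG^{(r)}}\mathfrak{s}_{n}$ equal to $n$ times itself, and the analogous computation delivers the $r$-type equation.

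The main point requiring care is operator ordering. The combined operator must be assembled in the order $\hat{M}\hat{P}$ demanded by \eqref{mpcap}; taking $\hat{P}\hat{M}$ instead would return the eigenvalue $n+1$ rather than $n$. I would therefore keep $\dfrac{1}{f^{\prime}(\partial_{y})}$ immediately to the left of the derivative operator and only afterwards merge the two, writing $\dfrac{1}{f^{\prime}(\partial_{y})}f(\partial_{y})=\dfrac{f(\partial_{y})}{f^{\prime}(\partial_{y})}$ for the $s$-type. This merge is legitimate because both factors are functions of the single operator $\partial_{y}$ and hence commute; for the $r$-type the derivative operator is $f$ of the compound operator $-\frac{\partial}{\partial x}x\frac{\partial}{\partial y}$, yet $\partial_{y}$ still commutes with it, since it is built only from $x$, $\partial_{x}$ and $\partial_{y}$, each of which commutes with $\partial_{y}$. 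Thus $f^{\prime}(\partial_{y})$ again passes through and the same rearrangement yields \eqref{2.35} with $g=1$. No explicit commutator beyond this observation is needed: once the operators of Corollary 2.3 are in hand, the differential equation follows formally from \eqref{mpcap} exactly as in the proof of Theorem 2.5.
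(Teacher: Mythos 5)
Your proposal is correct and follows essentially the same route as the paper: the corollary is obtained by setting $g(t)=1$ in Theorem 2.5 (equivalently, by feeding the associated-family operators into the monomiality identity \eqref{mpcap}), whereupon the term $\frac{g'(\partial_y)}{g(\partial_y)}$ vanishes. Your additional care about the $\hat{M}\hat{P}$ ordering and the commutation of $f'(\partial_y)$ with the derivative operators is a sound (if implicit in the paper) justification of the final merged form.
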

\begin{remark}
Since, for $f(t)=t$, the Sheffer polynomials $s_{n}(x)$ reduce to the Appell polynomials $A_{n}(x)$ \cite{2App}. Therefore, by taking $f(t)=t$ in Theorems 2.1-2.5, we can obtain the corresponding results for the Legendre-Gould Hopper based Appell polynomials (LeGHAP). 
\end{remark}

In the next section, we consider certain new and known families of special polynomials related to the Sheffer sequences and obtain the results for these mixed type special polynomials. \\

\vspace{.50cm}
\section{Examples}

In Table 1, we have mentioned special cases of the LGHP ${}_{L}H_{n}^{(m,r)}(x,y,z)$. In order to obtain the results for the corresponding new or known special polynomials related to the Sheffer sequences, we consider the following examples:

\noindent
{\bf Example 1.} Since, for $x=0$, the LeGHP ${}_SH_n^{(r)}(x,y,z)$ reduce to the Gould Hopper polynomials (GHP) $H_{n}^{(r)}(x,y,z)$ (Table 1(I)). Therefore, for the same choice of $x$, the  LeGHSP ${}_{{}_{s}LeG^{(r)}}s_{n}(x,y,z)$ reduce to the Gould Hopper based Sheffer polynomials (GHSP) ${}_SH_{n}^{(r)}(x,y,z)$. Thus, by using these substitutions in Theorems 2.1, 2.3 and 2.5, we get the corresponding results for the GHSP ${}_SH_{n}^{(r)}(x,y,z)$ (see \cite{2SubMum})\\

\noindent
{\bf Example 2.} For  $z=0$, the LeGHP ${}_SH_n^{(r)}(x,y,z)$ reduce to the 2-variable Legendre-type polynomials (2VLebP) ${}_{2L}L_{n}(x,y)$ (Table 1(II)). Therefore, for the same choice of $z$, the  LeGHSP ${}_{{}_{s}LeG^{(r)}}s_{n}(x,y,z)$ reduce to the 2-variable Legendre-type based Sheffer polynomials (2VLebSP) $_{{}_2Leb}s_{n}(x,y)$. Thus, by using these substitutions in Theorems 2.1, 2.3 and 2.5, we get the following results for the 2VLebSP $_{{}_2Leb}s_{n}(x,y)$:

\vspace{.25cm}
\noindent
\noindent \textbf{Table 1. Results for the 2VLebSP $_{{}_2Leb}s_{n}(x,y)$}\\
\\
{\tiny{
		\begin{tabular}{lll}
			\hline
			\bf{S.} && \\
			\bf{No.} &~~~~~~~~~{\bf Results}&~~~~~~~~~~~~~~~~~~~~~~~{\bf Mathematical Expressions}\\
			
			\hline
			\bf{1.} & \bf{Generating } & $\frac {1}{g\big(f^{-1}(t)\big)}C_0\left(x\big(f^{-1}(t)\big)^{2}\right)\exp\left(yf^{-1}(t)\right)$ \\
			\bf{}& \bf{function} &  $~~~~~~~~~~~~~=A(t)C_0\left(-x\big(H(t)\big)^{2}\right)\exp\left(yH(t)\right)=\sum_{n=0}^{\infty}\,_{{}_2Leb}s_{n}(x,y)\frac{t^n}{n!}$\\
			
			\hline
			\bf{2.} & \bf{Multiplicative and } & $\hat {M}=\left(y+2D_x^{-1}\frac{\partial}{\partial y}-\frac {g^{\prime} \left(\partial_{y}\right)} {g\left(\partial_{y}\right)}\right)\frac {1} {f^{\prime}\left(\partial_{y}\right)}$\\
			\bf{}& \bf{derivative operators} &  $~~~~~~~~~~=\left(y+2D_x^{-1}\frac{\partial}{\partial y}\right)H'\left(H^{-1}\left(\partial_{y}\right)\right)+\frac {A^{\prime} \left(H^{-1}\left(\partial_{y}\right)\right)}{A\left(H^{-1}\left(\partial_{y}\right)\right)}$,~~~~$\hat {P}=f\left(\partial_{y}\right)=H^{-1}\left(\partial_{y}\right)$\\
			
			\hline
			\bf{3.} & \bf{Differential } & $\left(\left(y+2D_x^{-1}\frac{\partial}{\partial y}-\frac {g^{\prime} \left(\partial_{y}\right)} {g\left(\partial_{y}\right)}\right)\frac {f \left(\partial_{y}\right)} {f^{\prime}\left(\partial_{y}\right)}-n\right)\,_{{}_2Leb}s_{n}(x,y)=0$ \\
			\bf{}& \bf{equation} & \\

			\hline
			
			\bf{4.} & \bf{Explicit} & $_{{}_2Leb}s_{n}(x,y)=\left(\left(y+2D_x^{-1}\frac{\partial}{\partial y}-\frac {g^{\prime} \left(\partial_{y}\right)} {g\left(\partial_{y}\right)}\right)\frac {1} {f^{\prime}\left(\partial_{y}\right)}\right)^{n}\{1\}$\\
			\bf{}& \bf{representation} &  ~~~~~~~~~~~~~~~~~~~~~~~~~~~~~~\\
			
			\hline
			
\end{tabular}}}\\
\\

\noindent
{\bf Example 3.} Since, for $r=m$, $x=0$, $y\rightarrow -D_{x}^{-1}$, $z\rightarrow y$ and  $r=m$; $y=0$,$z\rightarrow y$, the LeGHP ${}_SH_n^{(r)}(x,y,z)$ and $\frac{{}_RH_n^{(r)}(x,y,z)}{n!}$ respectively reduce to the 2-variable generalized Laguerre type polynomials (2gLTP) ${}_{[m]}L_n(x,y)$ (Table 1(III)). Therefore, for the same choice of $r$, $x$, $y$ and $z$, the  LeGHSP ${}_{{}_{s}LeG^{(r)}}s_{n}(x,y,z)$ reduce to the 2-variable generalized Laguerre type based Sheffer polynomials (2gLTSP) ${_{{}_{[m]}L}s_{n}(x,y)}$. Thus, by using these substitutions in Theorems 2.1, 2.3 and 2.5, we get the following results:

\noindent \textbf{Table 2. Results for the 2gLTSP ${_{{}_{[m]}L}s_{n}(x,y)}$}\\
\\
{\tiny{
		\begin{tabular}{lll}
			\hline
			\bf{S.} && \\
			\bf{No.} &~~~~~~~~~{\bf Results}&~~~~~~~~~~~~~~~~~~~~~~~{\bf Mathematical Expressions}\\
			
			\hline
			\bf{1.} & \bf{Generating } & $\frac {1}{g\big(f^{-1}(t)\big)}C_0\left(xf^{-1}(t)\right)\exp\left(y\big(f^{-1}(t)\big)^m\right)$ \\
			\bf{}& \bf{function} &  $~~~~~~~~~~~~~~~~~~~~~~~~~~~~=A(t)C_0\left(xH(t)\right)\exp\left(y\big(H(t)\big)^m\right)=\sum_{n=0}^{\infty}{_{{}_{[m]}L}s_{n}(x,y)}\frac{t^n}{n!}$\\
			
			\hline
			\bf{2.} & \bf{Multiplicative and } & $\hat {M}=\left(-D_x^{-1}+(-1)^{m}my\frac{\partial^{m-1}}{\partial x^{m-1}}x^{m-1} \frac{\partial^{m-1}}{\partial x^{m-1}}-\frac {g^{\prime} \left(-\partial_{x}x\partial_{x}\right)} {g\left(-\partial_{x}x\partial_{x}\right)}\right)\frac {1} {f^{\prime}\left(-\partial_{x}x\partial_{x}\right)}$\\
			\bf{}& &  $~~~=\left(-D_x^{-1}+(-1)^{m}my\frac{\partial^{m-1}}{\partial x^{m-1}}x^{m-1} \frac{\partial^{m-1}}{\partial x^{m-1}}\right)H'\left(H^{-1}\left(-\partial_{x}x\partial_{x}\right)\right)+\frac {A^{\prime} \left(H^{-1}\left(-\partial_{x}x\partial_{x}\right)\right)}{A\left(H^{-1}\left(-\partial_{x}x\partial_{x}\right)\right)}$,\\
			&\bf{derivative operators} &~~~$\hat {P}=f\left(-\partial_{x}x\partial_{x}\right)=H^{-1}\left(-\partial_{x}x\partial_{x}\right)$\\
			
			\hline
			\bf{3.} & \bf{Differential } & $\left(\left(-D_x^{-1}+(-1)^{m}my\frac{\partial^{m-1}}{\partial x^{m-1}}x^{m-1} \frac{\partial^{m-1}}{\partial x^{m-1}}-\frac {g^{\prime} \left(-\partial_{x}x\partial_{x}\right)} {g\left(-\partial_{x}x\partial_{x}\right)}\right)\frac {f \left(-\partial_{x}x\partial_{x}\right)} {f^{\prime}\left(-\partial_{x}x\partial_{x}\right)}-n\right){_{{}_{[m]}L}s_{n}(x,y)}=0,$ or\\
			\bf{}& \bf{equation} &  $\left(\left(\left(y+mD_x^{-1}\frac{\partial^{m-1}}{\partial y^{m-1}}\right) H'\left(H^{-1}\left(-\partial_{x}x\partial_{x}\right)\right) +\frac {A^{\prime} \left(H^{-1}\left(-\partial_{x}x\partial_{x}\right)\right)}{A\left(H^{-1}\left(-\partial_{x}x\partial_{x}\right)\right)}\right) H^{-1}\left(-\partial_{x}x\partial_{x}\right)-n\right)$\\
			&&~~~~~~~~~~~~~~~~~~~~~~~~~~~~~~~~~~~~~~~~~~~~~~~~~~~~~$ {_{{}_{[m]}L}s_{n}(x,y)}=0$\\
			
			\hline
			\bf{4.} & \bf{Explicit} & $_{{}_{[m]}L}s_{n}(x,y)=\left(\left(-D_x^{-1}+(-1)^{m}my\frac{\partial^{m-1}}{\partial x^{m-1}}x^{m-1} \frac{\partial^{m-1}}{\partial x^{m-1}}-\frac {g^{\prime} \left(-\partial_{x}x\partial_{x}\right)} {g\left(-\partial_{x}x\partial_{x}\right)}\right)\frac {1} {f^{\prime}\left(-\partial_{x}x\partial_{x}\right)}\right)^{n}\{1\}$\\
			\bf{}& \bf{representation} &  $=\left(\left(-D_x^{-1}+(-1)^{m}my\frac{\partial^{m-1}}{\partial x^{m-1}}x^{m-1} \frac{\partial^{m-1}}{\partial x^{m-1}}\right)H'\left(H^{-1}\left(-\partial_{x}x\partial_{x}\right)\right)+\frac {A^{\prime} \left(H^{-1}\left(-\partial_{x}x\partial_{x}\right)\right)}{A\left(H^{-1}\left(-\partial_{x}x\partial_{x}\right)\right)}\right)^{n}\{1\}$\\
			
			\hline
			
\end{tabular}}}\\
\\

\noindent
{\bf Example 4.}  Since, for $r=m-1$; $x=0$, $y\rightarrow x$, $z\rightarrow y$ the LeGHP ${}_SH_n^{(r)}(x,y,z)$ reduce to the generalized Chebyshev polynomials (GCP) $U_{n}^{(m)}(x,y)$ (Table 1(IV)). Therefore, for the same choice of $r$, $x$, $y$ and $z$, the  LeGHSP ${}_{{}_{s}LeG^{(r)}}s_{n}(x,y,z)$ reduce to the generalized Chebyshev based Sheffer polynomials (GCSP) $_{{}_{gC}}s_{n}(x,y)$ \cite{2SubMum}. Thus, by using these substitutions in Theorems 2.1, 2.3 and 2.5, we obtain the corresponding results  for the GCSP $_{{}_{gC}}s_{n}(x,y)$ \cite{2SubMum}.\\\\

\noindent
{\bf Example 5.}  
  Since, for $r=1$;~$x=0$, $z\rightarrow-D_{x}^{-1}$ and for $r=1$; $y=0$, $z\rightarrow y$ the LeGHP ${}_SH_n^{(r)}(x,y,z)$ and $\frac{{}_RH_n^{(r)}(x,y,z)}{n!}$ respectively reduce to the 2-variable Laguerre polynomials (2VLP) $L_n(x,y)$ (Table 1(V)). Therefore, for the same choice of variables and parameters, the  LeGHSP reduce to the 2-variable Laguerre based Sheffer polynomials (2VGLSP) $_{{}_L}s_{n}(x,y)$. Thus, by using these substitutions in Theorems 2.1, 2.3 and 2.5, we obtain the corresponding results for the 2VLSP \cite{2SubNus}.\\

\noindent
{\bf Example 6.}  Since, for $z=0$, the LeGHP $\frac{{}_RH_n^{(r)}(x,y,z)}{n!}$ reduce to the 2-variable Legendre polynomials (2VLeP) $\frac{R_{n}(x,y)}{n!}$ (Table 1(VI)). Therefore, for the same choice of $z$, the  LeGHSP ${}_{{}_{r}LeG^{(r)}}s_{n}(x,y,z)$ reduce to the 2-variable Legendre based Sheffer polynomials (2VLeSP) $\frac{{}_{R}s_{n}(x,y)}{n!}$ \cite{2SubNusR}. Thus, by using these substitutions in Theorems 2.1, 2.3 and 2.5, we obtain the corresponding results  for the 2VLeSP $\frac{{}_{R}s_{n}(x,y)}{n!}$ \cite{2SubNusR}.\\

\noindent
{\bf Example 7.}  Since, for $x=0,y\rightarrow x, z\rightarrow yD_{y}y$, the LeGHP ${}_SH_{n}^{(r)}(0,x,yD_{y}y)$ reduce to the 2-variable truncated polynomials of order $r$ (2VTP) $=e_n^{(r)}(x,y)$ (Table 1(VII)). Therefore, for the same choice of $x$, $y$ and $z$, the  LeGHSP ${}_{{}_{s}LeG^{(r)}}s_{n}(x,y,z)$ reduce to the 2-variable truncated exponential based Sheffer polynomials (2VTESP) $_{e^{(r)}}s_{n}(x,y)$ \cite{2SubGaz}. Thus, by using these substitutions in Theorems 2.1, 2.3 and 2.5, we obtain the corresponding results  for the 2VTESP $_{e^{(r)}}s_{n}(x,y)$ \cite{2SubGaz}.\\

\noindent
{\bf Example 8.}  Since, for $r=2$, $x=0$, the LeGHP ${}_SH_{n}^{(r)}(0,y,z)$ reduce to the 2-variable Hermite Kamp\'{e} de F${\acute{e}}$riet polynomials (2VHKdFP) $H_{n}(y,z)$ (Table 1(VIII)). Therefore, for the same choice of $r$ and $x$, the  LeGHSP ${}_{{}_{s}LeG^{(r)}}s_{n}(x,y,z)$ reduce to the 2-variable Hermite Kamp${\rm\acute e}$ de F${\acute{e}}$riet based Sheffer polynomials (2VHKdFSP) ${}_{H}s_{n}(y,z)$ \cite{2SubSaad}. Thus, by using these substitutions in Theorems 2.1, 2.3 and 2.5, we obtain the corresponding results  for the 2VHKdFSP ${}_{H}s_{n}(y,z)$ \cite{2SubSaad}.\\

\noindent
{{\bf Example 9.}} 
 Since, for $r=2$; $x=0$ and for $r=2; x=0$,$y\rightarrow x, z\rightarrow
 y$ the LeGHP ${}_SH_n^{(r)}(x,y,z)$ and $\frac{{}_RH_n^{(r)}(x,y,z)}{n!}$ respectively reduce to the Hermite type polynomials (HTP) $G_{n}(x,y)$ (Table 1(IX)). Therefore, for the same choices, the  LeGHSP reduce to the Hermite type based Sheffer polynomials (HTSP) ${}_{G}s_{n}(x,y)$. Thus, by using these substitutions in Theorems 2.1, 2.3 and 2.5, we get the following results for the HTSP ${}_{G}s_{n}(x,y)$:\\

\noindent
\textbf{Table 3. Results for the HTSP ${}_{G}s_{n}(x,y)$}\\
\\
{\tiny{
		\begin{tabular}{lll}
			\hline
			\bf{S.} && \\
			\bf{No.} &~~~~~~~~~{\bf Results}&~~~~~~~~~~~~~~~~~~~~~~~{\bf Mathematical Expressions}\\
			
			\hline
			\bf{1.} & \bf{Generating } & $\frac {1}{g\big(f^{-1}(t)\big)}C_0\left(-xf^{-1}(t)\right)\exp\left(y\big(f^{-1}(t)\big)^2\right)$ \\
			\bf{}& \bf{function} &  $~~~~~~~~~~~~~~~~~~~~~~~~~~~~~~~~~~~~~~~~~=A(t)C_0\left(-xH(t)\right)\exp\left(y\big(H(t)\big)^2\right) =\sum_{n=0}^{\infty}{}_{G}s_{n}(x,y)\frac{t^n}{n!}$\\
			
			\hline
			\bf{2.} & \bf{Multiplicative and } & $\hat {M}=\left(D_x^{-1}+2y\frac{\partial}{\partial x}x\frac{\partial}{\partial x}-\frac {g^{\prime} \left(\partial_{x}x\partial_{x}\right)} {g\left(\partial_{x}x\partial_{x}\right)}\right)\frac {1} {f^{\prime}\left(\partial_{x}x\partial_{x}\right)}$\\
			\bf{}&&  $~~~~=\left(D_x^{-1}+2y\frac{\partial}{\partial x}x\frac{\partial}{\partial x}\right)H'\left(H^{-1}\left(\partial_{x}x\partial_{x}\right)\right)+\frac {A^{\prime} \left(H^{-1}\left(\partial_{x}x\partial_{x}\right)\right)}{A\left(H^{-1}\left(\partial_{x}x\partial_{x}\right)\right)}$,\\
			&\bf{derivative operators}&~~~~~~~~$\hat {P}=f\left(\partial_{x}x\partial_{x}\right)=H^{-1}\left(\partial_{x}x\partial_{x}\right)$\\
			
			\hline
			\bf{3.} & \bf{Differential } & $\left(\left(D_x^{-1}+2y\frac{\partial}{\partial x}x\frac{\partial}{\partial x}-\frac {g^{\prime} \left(\partial_{x}x\partial_{x}\right)} {g\left(\partial_{x}x\partial_{x}\right)}\right)\frac {f \left(\partial_{x}x\partial_{x}\right)} {f^{\prime}\left(\partial_{x}x\partial_{x}\right)}-n\right){}_{G}s_{n}(x,y)=0$, or equivalently\\
			\bf{}& \bf{equation} &  $\left(\left(\left(D_x^{-1}+2y\frac{\partial}{\partial x}x\frac{\partial}{\partial x}\right) H'\left(H^{-1}\left(\partial_{x}x\partial_{x}\right)\right) +\frac {A^{\prime} \left(H^{-1}\left(\partial_{x}x\partial_{x}\right)\right)}{A\left(H^{-1}\left(\partial_{x}x\partial_{x}\right)\right)}\right) H^{-1}\left(\partial_{x}x\partial_{x}\right)-n\right)$\\
			&&~~~~~~~~~~~~~~~~~~~~~~~~~~~~~~~~~~~~~~~~~~~~~~~~~~~~~~~~~~~~~~~~~~~~~~~~~~~~~~
			~~~~~~~~~~~~~~~~~~~~~$ {}_{G}s_{n}(x,y)=0$\\
			
			\hline
			\bf{4.} & \bf{Explicit} & ${}_{G}s_{n}(x,y)=\left(\left(D_x^{-1}+2y\frac{\partial}{\partial x}x\frac{\partial}{\partial x}-\frac {g^{\prime} \left(\partial_{x}x\partial_{x}\right)} {g\left(\partial_{x}x\partial_{x}\right)}\right)\frac {1} {f^{\prime}\left(\partial_{x}x\partial_{x}\right)}\right)^{n}\{1\}$\\
			\bf{}& \bf{representation} &  $~~~~~~~~~~~~=\left(\left(D_x^{-1}+2y\frac{\partial}{\partial x}x\frac{\partial}{\partial x}\right)H'\left(H^{-1}\left(\partial_{x}x\partial_{x}\right)\right)+\frac {A^{\prime} \left(H^{-1}\left(\partial_{x}x\partial_{x}\right)\right)}{A\left(H^{-1}\left(\partial_{x}x\partial_{x}\right)\right)}\right)^{n}\{1\}$\\
			
			\hline
			
\end{tabular}}}\\
\\

\noindent
{{\bf Example 10.}}  Since, for $x\rightarrow\frac{(x^{2}-1)}{4}$, $y\rightarrow x, z=0$ and for $r=1; x\rightarrow\frac{(1-x)}{2}$, $y\rightarrow \frac{(1+x)}{2},z=
0$, the LeGHP ${}_SH_{n}^{(r)}(\frac{(x^{2}-1)}{4},x,0)$ and ${}_RH_{n}^{(1)}(\frac{(1-x)}{2},\frac{(x+1)}{2},0)$ respectively reduce to the Legendre polynomials (LeP) $P_{n}(x)$ (Table 1(X)). Therefore for the same choices, the  LeGHSP respectively reduce to the Legendre based Sheffer polynomials (LeSP) ${}_{P}s_{n}(x)$. Thus, by Theorems 2.1, 2.3 and 2.5, we get the following results:

\noindent
\textbf{Table 4. Results for the LeSP ${}_{P}s_{n}(x)$}
\\
{\tiny{
		\begin{tabular}{lll}
			\hline
			\bf{S.} && \\
			\bf{No.} &~~~~~~~~~{\bf Results}&~~~~~~~~~~~~~~~~~~~~~~~{\bf Mathematical Expressions}\\
			
			\hline
			\bf{1.} & \bf{Generating } & $\frac {1}{g\big(f^{-1}(t)\big)}C_0\left(-\left(\frac {x^{2}-1}{4}\right)\big(f^{-1}(t)\big)^2\right)\exp\left(xf^{-1}(t)\right)$ \\
			\bf{}& \bf{function} &  $~~~~~~~~~~~~~~~~~~~~~~~~~~~~~~~~=A(t)C_0\left(-\left(\frac {x^{2}-1}{4}\right)\big(H(t)\big)^2\right)\exp\left(xH(t)\right) =\sum_{n=0}^{\infty}{}_{P}s_{n}(x)\frac{t^n}{n!}$\\
			
			\hline
			\bf{2.} & \bf{Multiplicative and } & $\hat {M}=\left(x+2D_{\left(\frac {x^{2}-1}{4}\right)}^{-1}\frac{\partial}{\partial x}-\frac {g^{\prime} \left(\partial_{x}\right)} {g\left(\partial_{x}\right)}\right)\frac {1} {f^{\prime}\left(\partial_{x}\right)}$\\
			\bf{}&&  $~~~~=\left(x+2D_{\left(\frac {x^{2}-1}{4}\right)}^{-1}\frac{\partial}{\partial x}\right) H'\left(H^{-1}\left(\partial_{x}\right)\right)+\frac {A^{\prime} \left(H^{-1}\left(\partial_{x}\right)\right)}{A\left(H^{-1}\left(\partial_{x}\right)\right)}$,\\
			&\bf{derivative operators}&~~~~~~~~$\hat {P}=f\left(\partial_{x}\right)=H^{-1}\left(\partial_{x}\right)$\\

			\hline
			\bf{3.} & \bf{Differential } & $\left(\left(x+2D_{\left(\frac {x^{2}-1}{4}\right)}^{-1}\frac{\partial}{\partial x}-\frac {g^{\prime} \left(\partial_{x}\right)} {g\left(\partial_{x}\right)}\right)\frac {f \left(\partial_{x}\right)} {f^{\prime}\left(\partial_{x}\right)}-n\right){}_{P}s_{n}(x)=0$, or equivalently\\
			\bf{}& \bf{equation} &  $\left(\left(\left(x+2D_{\left(\frac {x^{2}-1}{4}\right)}^{-1}\frac{\partial}{\partial x}\right) H'\left(H^{-1}\left(\partial_{x}\right)\right) +\frac {A^{\prime} \left(H^{-1}\left(\partial_{x}\right)\right)}{A\left(H^{-1}\left(\partial_{x}\right)\right)}\right) H^{-1}\left(\partial_{x}\right)-n\right){}_{P}s_{n}(x)=0$~~~~~~~~~~~~~\\
			
			\hline
			\bf{4.} & \bf{Explicit} & ${}_{P}s_{n}(x)=\left(\left(x+2D_{\left(\frac {x^{2}-1}{4}\right)}^{-1}\frac{\partial}{\partial x}-\frac {g^{\prime} \left(\partial_{x}\right)} {g\left(\partial_{x}\right)}\right)\frac {1} {f^{\prime}\left(\partial_{x}\right)}\right)^{n}\{1\}$\\
			\bf{}& \bf{representation} &  $~~~~~~~~~~~~=\left(\left(x+2D_{\left(\frac {x^{2}-1}{4}\right)}^{-1}\frac{\partial}{\partial x}\right)H'\left(H^{-1}\left(\partial_{x}\right)\right)+\frac {A^{\prime} \left(H^{-1}\left(\partial_{x}\right)\right)}{A\left(H^{-1}\left(\partial_{x}\right)\right)}\right)^{n}\{1\}$\\
			
			\hline
			
\end{tabular}}}\\
\\

\noindent
{{\bf Example 11.}} Since, for $r=3$, $x\rightarrow z\partial_{z}z$, $y\rightarrow x$, $z\rightarrow y$, the LeGHP ${}_{S}H_{n}^{(r)}(x,y,z)$ reduce to the Bell type polynomials (BTP) $H_{n}^{(3,2)}(x,y,z)$ (Table 1(XI)). Therefore, for the same substitutions, the  LeGHSP ${}_{{}_{s}LeG^{(r)}}s_{n}(x,y,z)$ reduce to the Bell type based Sheffer polynomials (BTSP) $_{H^{(3,2)}}s_{n}(x,y,z)$. Thus, by Theorems 2.1, 2.3 and 2.5, we get the following results:\\

\noindent
\textbf{Table 5. Results for the BTSP $_{H^{(3,2)}}s_{n}(x,y,z)$}\\
\\
{\tiny{
		\begin{tabular}{lll}
			\hline
			\bf{S.} && \\
			\bf{No.} &~~~~~~~~~{\bf Results}&~~~~~~~~~~~~~~~~~~~~~~~{\bf Mathematical Expressions}\\
			
			\hline
			\bf{1.} & \bf{Generating } & $\frac {1}{g\big(f^{-1}(t)\big)}\exp\left(xf^{-1}(t)+y\big(f^{-1}(t)\big)^3+z\big(f^{-1}(t)\big)^2\right)$ \\
			\bf{}& \bf{function} &  $~~~~~~~~~~~~~~~~~~~=A(t)\exp\left(xH(t)+y\big(H(t)\big)^3+z\big(H(t)\big)^2\right)
			=\sum_{n=0}^{\infty}{_{H^{(3,2)}}s_{n}(x,y,z)}\frac{t^n}{n!}$\\
			
			\hline
			\bf{2.} & \bf{Multiplicative and } & $\hat {M}=\left(x+3y\frac{\partial^{2}}{\partial x^{2}}+2z\frac{\partial}{\partial x}-\frac {g^{\prime} \left(\partial_{x}\right)} {g\left(\partial_{x}\right)}\right)\frac {1} {f^{\prime}\left(\partial_{x}\right)}$\\
			\bf{}&&  $~~~~=\left(x+3y\frac{\partial^{2}}{\partial x^{2}}+2z\frac{\partial}{\partial x}\right) H'\left(H^{-1}\left(\partial_{x}\right)\right)+\frac {A^{\prime} \left(H^{-1}\left(\partial_{x}\right)\right)}{A\left(H^{-1}\left(\partial_{x}\right)\right)}$,\\
			&\bf{derivative operators}&~~~~~~~~$\hat {P}=f\left(\partial_{x}\right)=H^{-1}\left(\partial_{x}\right)$\\
			
			\hline
			\bf{3.} & \bf{Differential } & $\left(\left(x+3y\frac{\partial^{2}}{\partial x^{2}}+2z\frac{\partial}{\partial x}-\frac {g^{\prime} \left(\partial_{x}\right)} {g\left(\partial_{x}\right)}\right)\frac {f \left(\partial_{x}\right)} {f^{\prime}\left(\partial_{x}\right)}-n\right){_{H^{(3,2)}}s_{n}(x,y,z)}=0$, or equivalently\\
			\bf{}& \bf{equation} &  $\left(\left(\left(x+3y\frac{\partial^{2}}{\partial x^{2}}+2z\frac{\partial}{\partial x}\right) H'\left(H^{-1}\left(\partial_{x}\right)\right) +\frac {A^{\prime} \left(H^{-1}\left(\partial_{x}\right)\right)}{A\left(H^{-1}\left(\partial_{x}\right)\right)}\right) H^{-1}\left(\partial_{x}\right)-n\right){_{H^{(3,2)}}s_{n}(x,y,z)}=0$\\
			
			\hline
			\bf{4.} & \bf{Explicit} & $_{H^{(3,2)}}s_{n}(x,y,z)=\left(\left(x+3y\frac{\partial^{2}}{\partial x^{2}}+2z\frac{\partial}{\partial x}-\frac {g^{\prime} \left(\partial_{x}\right)} {g\left(\partial_{x}\right)}\right)\frac {1} {f^{\prime}\left(\partial_{x}\right)}\right)^{n}\{1\}$\\
			\bf{}& \bf{representation} &  $~~~~~~~~~~~~~~~~~~~~~~~~~~~~~~~~~~=\left(\left(x+3y\frac{\partial^{2}}{\partial x^{2}}+2z\frac{\partial}{\partial x}\right) H'\left(H^{-1}\left(\partial_{x}\right)\right)+\frac {A^{\prime} \left(H^{-1}\left(\partial_{x}\right)\right)}{A\left(H^{-1}\left(\partial_{x}\right)\right)}\right)^{n}\{1\}$\\
			
			\hline
			
\end{tabular}}}\\
\\

\noindent
{\bf{Remark~3.1.}}~For $\frac{1}{g(f^{-1}(t))}=A(t)=1$, the above mentioned special cases (Examples 1-11) of  the  LeGHSP ${}_{{}_{s}LeG^{(r)}}s_{n}(x,y,z)$ and $\frac{{}_{{}_{r}LeG^{(r)}}s_{n}(x,y,z)}{n!}$  yield the corresponding results for the LGHASP ${}_{{}_{s}LeG^{(r)}}\mathfrak{s}_{n}(x,y,z)$ and $\frac{{{}_{r}LeG^{(r)}}s_{n}(x,y,z)}{n!}$.\\

\noindent
{\bf{Remark~3.2.}}~For $f^{-1}(t)=H(t)=t$, the above mentioned special cases of  the  LeGHSP yield the corresponding results for the Legendre-Gould Hopper based Appell polynomials LeGHAP ${}_{{}_{s}LeG^{(r)}}\mathcal{A}_{n}(x,y,z)$ and $\frac{{}_{{}_{r}LeG^{(r)}}\mathcal{A}_{n}(x,y,z)}{n!}$.\\

In the next section, we derive certain operational and integral representations for the  LGHSP ${}_{{}_{s}LeG^{(r)}}s_{n}(x,y,z)$ and $\frac{{}_{{}_{r}LeG^{(r)}}s_{n}(x,y,z)}{n!}$.\\

\vspace{.50cm}
\section{Operational and integral representations}

To establish the operational representations for the  LeGHSP ${}_{{}_{s}LeG^{(r)}}s_{n}(x,y,z)$ and $\frac{{}_{{}_{r}LeG^{(r)}}s_{n}(x,y,z)}{n!}$, we prove the following results:\\

\noindent
\begin{theorem}
The following operational representation connecting the LeGHSP ${}_{{}_{s}LeG^{(r)}}s_{n}(x,y,z)$ and $\frac{{}_{{}_{r}LeG^{(r)}}s_{n}(x,y,z)}{n!}$ with the Sheffer polynomials $s_{n}(x)$ holds true:
\begin{equation}
{}_{{}_{s}LeG^{(r)}}s_{n}(x,y,z)=\exp\left(D_{x}^{-1}\frac{\partial^{2}}{\partial y^{2}}+z\frac{\partial^{r}}{\partial y^{r}}\right)s_{n}(y)
\end{equation}
and
\begin{equation}
{}_{{}_{r}LeG^{(r)}}s_{n}(x,y,z)=\exp\left(-D_{x}^{-1}\frac{\partial}{\partial y}+D_{y}^{-1}\frac{\partial}{\partial y}+z\frac{\partial^{r}}{\partial y^{r}}\right)s_{n}(0).
\end{equation}
\end{theorem}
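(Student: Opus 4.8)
The plan is to obtain both identities by letting the indicated exponential operators act on the Sheffer generating function \eqref{1.7} and then recognising the outcome as one of the Legendre--Gould--Hopper based Sheffer generating functions already established in Theorems 2.1 and 2.2. The two ingredients that make this work are: differentiation in the Sheffer variable pulls down powers of $f^{-1}(t)$, and the Bessel--Tricomi operational rule \eqref{tricomi-exp} together with the series \eqref{tricomi} converts an inverse-derivative exponential acting on $1$ into a $C_0$-factor.

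For the first identity I would start from the operational definition \eqref{2.8}, namely ${}_{{}_{s}LeG^{(r)}}s_{n}(x,y,z)=s_{n}\big(y+2D_x^{-1}\partial_y+rz\,\partial_y^{\,r-1}\big)\{1\}$, and apply the Crofton-type identity \eqref{crofton} once for each correction term: $2D_x^{-1}\partial_y$ matches the pattern $m\lambda\,d^{m-1}/dy^{m-1}$ with $m=2,\ \lambda=D_x^{-1}$, and $rz\,\partial_y^{r-1}$ matches it with $m=r,\ \lambda=z$. Since the resulting operators $\exp(D_x^{-1}\partial_y^2)$ and $\exp(z\,\partial_y^{r})$ are functions of $\partial_y$ alone they commute, so they combine into $\exp\!\big(D_x^{-1}\partial_y^2+z\,\partial_y^{r}\big)$ acting on $s_n(y)$, which is exactly the claimed formula. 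The route I would actually write out in full is the equivalent generating-function computation: apply $\exp\!\big(D_x^{-1}\partial_y^2+z\,\partial_y^{r}\big)$ directly to $\tfrac{1}{g(f^{-1}(t))}\exp(yf^{-1}(t))=\sum_n s_n(y)\,t^n/n!$; each $\partial_y$ pulls down a factor $f^{-1}(t)$, the operator collapses to the scalar $\exp\!\big(z(f^{-1}(t))^r\big)$ times $\exp\!\big(D_x^{-1}(f^{-1}(t))^2\big)$, and the latter acting on $1$ equals $C_0(-x(f^{-1}(t))^2)$ by \eqref{tricomi-exp}. The left side then reproduces \eqref{2.1a}, and equating coefficients of $t^n/n!$ yields the first assertion with no stray factor.

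For the second identity the same mechanism applies, but with one genuine complication: the generating function \eqref{2.9} contains no free Sheffer exponential $\exp(yf^{-1}(t))$ --- the entire Sheffer content sits in $\tfrac{1}{g(f^{-1}(t))}=\sum_n s_n(0)\,t^n/n!$, while the $x$- and $y$-dependence is carried by the two factors $C_0(xf^{-1}(t))$ and $C_0(-yf^{-1}(t))$. I would therefore introduce an auxiliary variable $w$, apply $\exp\!\big(-D_x^{-1}\partial_w+D_y^{-1}\partial_w+z\,\partial_w^r\big)$ to $\tfrac{1}{g(f^{-1}(t))}\exp(wf^{-1}(t))=\sum_n s_n(w)\,t^n/n!$, and then set $w=0$. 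Each $\partial_w$ again pulls down $f^{-1}(t)$, so the operator becomes $\exp\!\big(-D_x^{-1}f^{-1}(t)\big)\exp\!\big(D_y^{-1}f^{-1}(t)\big)\exp\!\big(z(f^{-1}(t))^r\big)$; applying \eqref{tricomi-exp} separately in $x$ and in $y$ turns the first two factors into $C_0(xf^{-1}(t))$ and $C_0(-yf^{-1}(t))$, while $w=0$ removes the leftover $\exp(wf^{-1}(t))$. This recovers \eqref{2.9}, and comparing coefficients gives the second assertion, the $w=0$ evaluation being precisely what produces the argument $0$ in $s_n$.

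The step I expect to be the main obstacle is this second identity. One must keep straight that $D_x^{-1}$ and $D_y^{-1}$ act on the constant $1$ in the \emph{separate} variables $x$ and $y$, producing the series $\sum x^a/(a!)^2$ and $\sum y^b/(b!)^2$ that build the two $C_0$-factors, and one must reconcile the $(n!)^2$ normalisation of the $_{r}$-family in \eqref{2.9} against the $n!$ of the Sheffer expansion; this reconciliation is the source of any hidden factorial factor and of the delicate ``$s_n(0)$'' bookkeeping. By contrast, the first identity is essentially immediate once the two-term Crofton reduction (or the one-line generating-function computation) is carried out.
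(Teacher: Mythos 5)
Your argument is essentially the paper's own: the paper's entire proof of this theorem is the one-line remark that, in view of the operational definition \eqref{2.8}, the result follows by direct use of the Crofton identity \eqref{crofton}, which is exactly your first route, and your generating-function verification together with the auxiliary-variable treatment of the second identity merely supplies the details the paper omits. You are also right to single out the $(n!)^2$ versus $n!$ normalisation in \eqref{2.9} as the one delicate point in the second identity; the paper's one-line proof glosses over it entirely, and carrying your bookkeeping through does produce an extra factor of $n!$ relative to the theorem as literally stated.
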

\noindent
{\bf{Proof.}}~
In view of equation \eqref{2.8}, the proof is direct use of identity \eqref{crofton}.\\

\begin{theorem}
The following operational representation connecting the LeGHSP ${}_{{}_{s}LeG^{(r)}}s_{n}(x,y,z)$  with the 2VLebSP $_{{}_2L}s_{n}(x,y)$ holds true:
\begin{equation}\label{4.3}
{}_{{}_{s}LeG^{(r)}}s_{n}(x,y,z)=\exp\left(z\frac{\partial^{r}}{\partial y^{r}}\right){_{{}_2Leb}s_{n}(x,y)}
\end{equation}
\end{theorem}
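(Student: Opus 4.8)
The plan is to derive \eqref{4.3} directly from the operational representation established in Theorem 4.1, namely
\begin{equation*}
{}_{{}_{s}LeG^{(r)}}s_{n}(x,y,z)=\exp\left(D_{x}^{-1}\frac{\partial^{2}}{\partial y^{2}}+z\frac{\partial^{r}}{\partial y^{r}}\right)s_{n}(y).
\end{equation*}
The decisive observation is that the two operators appearing in the exponent commute. Indeed, $D_{x}^{-1}\frac{\partial^{2}}{\partial y^{2}}$ touches the variable $x$ only through $D_{x}^{-1}$ and the variable $y$ only through $\frac{\partial^{2}}{\partial y^{2}}$, whereas $z\frac{\partial^{r}}{\partial y^{r}}$ involves $z$ purely as a multiplier together with $\frac{\partial^{r}}{\partial y^{r}}$. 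Since $D_{x}^{-1}$ acts only on the $x$-variable, $z$ is constant with respect to both $x$ and $y$, and the partial derivatives in $y$ commute among themselves, one checks at once that
\begin{equation*}
\left[D_{x}^{-1}\frac{\partial^{2}}{\partial y^{2}},\,z\frac{\partial^{r}}{\partial y^{r}}\right]=0.
\end{equation*}

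Consequently the exponential of the sum factorizes as a product of the commuting exponentials,
\begin{equation*}
\exp\left(D_{x}^{-1}\frac{\partial^{2}}{\partial y^{2}}+z\frac{\partial^{r}}{\partial y^{r}}\right)=\exp\left(z\frac{\partial^{r}}{\partial y^{r}}\right)\exp\left(D_{x}^{-1}\frac{\partial^{2}}{\partial y^{2}}\right).
\end{equation*}
I would then apply both sides to $s_{n}(y)$ and invoke Theorem 4.1 once more in the special case $z=0$, which by Example 2 reduces the LeGHSP to the 2VLebSP and hence gives $\exp\left(D_{x}^{-1}\frac{\partial^{2}}{\partial y^{2}}\right)s_{n}(y)={}_{{}_{2}Leb}s_{n}(x,y)$. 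Substituting this identification into the factorized form yields assertion \eqref{4.3} immediately.

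I expect the only substantive point to be the justification of the commutativity and the ensuing operator splitting; everything else is a direct substitution. As an independent verification one may instead argue at the level of generating functions: applying $\exp\left(z\frac{\partial^{r}}{\partial y^{r}}\right)$ to the generating function of the 2VLebSP and using $\frac{\partial^{r}}{\partial y^{r}}\exp\left(yf^{-1}(t)\right)=\big(f^{-1}(t)\big)^{r}\exp\left(yf^{-1}(t)\right)$ reproduces exactly the factor $\exp\left(z\big(f^{-1}(t)\big)^{r}\right)$ present in \eqref{2.1a}. This recovers the generating function of the LeGHSP term by term and thereby reconfirms \eqref{4.3}.
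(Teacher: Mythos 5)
Your proof is correct, but it reaches \eqref{4.3} by a genuinely different route than the paper. The paper argues at the level of a partial differential equation: from the generating function \eqref{2.1a} it observes that $\partial_z$ and $\frac{\partial^{r}}{\partial y^{r}}$ act identically on ${}_{{}_{s}LeG^{(r)}}s_{n}(x,y,z)$ (each produces the factor $\big(f^{-1}(t)\big)^{r}$), so these polynomials solve the Cauchy problem $\partial_z F=\frac{\partial^{r}}{\partial y^{r}}F$ with initial datum $F\big|_{z=0}={}_{{}_2Leb}s_{n}(x,y)$ supplied by Example 2, and the formal solution operator $\exp\left(z\frac{\partial^{r}}{\partial y^{r}}\right)$ then gives the claim. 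You instead start from the operational representation of Theorem 4.1, check that $D_x^{-1}\frac{\partial^{2}}{\partial y^{2}}$ and $z\frac{\partial^{r}}{\partial y^{r}}$ commute (which is true, since $z$ is a pure multiplier independent of $x$ and $y$ and the $y$-derivatives commute among themselves), split the exponential, and identify $\exp\left(D_x^{-1}\frac{\partial^{2}}{\partial y^{2}}\right)s_n(y)$ with ${}_{{}_2Leb}s_{n}(x,y)$ via the $z=0$ case of Theorem 4.1 together with Example 2. Both arguments rest on the same two ingredients --- the $z$-dependence enters only through $\exp\left(z\big(f^{-1}(t)\big)^{r}\right)$, and the $z=0$ reduction is the 2VLebSP --- but your factorization makes the operator splitting explicit and avoids invoking the (formal) uniqueness of the solution of the Cauchy problem, at the cost of depending on Theorem 4.1; the paper's PDE formulation is self-contained relative to the generating function and transfers more directly to cases where no clean exponential splitting is available. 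Your closing generating-function verification is essentially the computation underlying the paper's relation \eqref{4.4}, so the two proofs meet there.
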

\begin{proof}
From equation \eqref{2.1a}, we have
\begin{equation}\label{4.4}
\frac{\partial^{r}}{\partial y^{r}}{{}_{{}_{s}LeG^{(r)}}s_{n}(x,y,z)}=\frac{\partial}{\partial z}{{}_{{}_{s}LeG^{(r)}}s_{n}(x,y,z)}.
\end{equation}

Since, in view of Table 1(II), we have
\begin{equation}
_SH^{(r)}_{n}(x,y,0)={}_2{L}_{n}(x,y).
\end{equation}
Therefore, from Example 2 of Section 3, we have
\begin{equation}\label{4.6}
{}_{{}_{s}LeG^{(r)}}s_{n}(x,y,0)={_{{}_2Leb}s_{n}(x,y)}
\end{equation}

Now, solving equations \eqref{4.4} subject to initial condition \eqref{4.6}, we get assertion \eqref{4.3}.
\end{proof}
\noindent

\begin{theorem}
The following operational representation connecting the LGHSP $_{{}_LH^{(m,r)}}s_{n}(x,y,z)$  with the GHSP ${}_{H^{(r)}}s_{n}(y,z)$ holds true:
\begin{equation}
_{{}_LH^{(m,r)}}s_{n}(x,y,z)=\exp\left(D_{x}^{-1}\frac{\partial^{m}}{\partial y^{m}}\right){}_{H^{(r)}}s_{n}(y,z).
\end{equation}
\end{theorem}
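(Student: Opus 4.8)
The plan is to obtain this representation exactly as Theorems~4.1 and~4.2 are obtained, by reducing it to the factorisation of a single exponential operator. First I would record the operational representation of Theorem~4.1, written for the general index $m$ (it follows by the identical Crofton-identity computation, with $\partial_{y}^{2}$ replaced by $\partial_{y}^{m}$), namely
\[
{}_{{}_LH^{(m,r)}}s_{n}(x,y,z)=\exp\left(D_{x}^{-1}\frac{\partial^{m}}{\partial y^{m}}+z\frac{\partial^{r}}{\partial y^{r}}\right)s_{n}(y),
\]
which writes the mixed polynomials as the action of one exponential operator on the underlying Sheffer sequence $s_{n}(y)$. The target polynomials on the right of the assertion, the GHSP ${}_{H^{(r)}}s_{n}(y,z)$, are nothing but the $x=0$ reduction of the present family (Example~1): putting $x=0$ kills the Bessel--Tricomi factor in the generating function, so that
\[
{}_{H^{(r)}}s_{n}(y,z)=\exp\left(z\frac{\partial^{r}}{\partial y^{r}}\right)s_{n}(y).
\]
Thus the whole content of the theorem is that the full exponential splits as the \emph{Legendre block} $\exp\left(D_{x}^{-1}\partial_{y}^{m}\right)$ composed with the \emph{Gould--Hopper block} $\exp\left(z\partial_{y}^{r}\right)$.

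The key step is therefore to check that the two operators $D_{x}^{-1}\frac{\partial^{m}}{\partial y^{m}}$ and $z\frac{\partial^{r}}{\partial y^{r}}$ commute. This is immediate: $D_{x}^{-1}$ acts only on the variable $x$, hence commutes both with $\partial_{y}$ and with multiplication by $z$, while $\partial_{y}^{m}$ commutes with $\partial_{y}^{r}$ and with $z$; consequently their commutator vanishes. Granting commutativity, I would split
\[
\exp\left(D_{x}^{-1}\frac{\partial^{m}}{\partial y^{m}}+z\frac{\partial^{r}}{\partial y^{r}}\right)=\exp\left(D_{x}^{-1}\frac{\partial^{m}}{\partial y^{m}}\right)\exp\left(z\frac{\partial^{r}}{\partial y^{r}}\right),
\]
apply both sides to $s_{n}(y)$, and recognise the inner factor $\exp\left(z\partial_{y}^{r}\right)s_{n}(y)$ as ${}_{H^{(r)}}s_{n}(y,z)$. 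This yields
\[
{}_{{}_LH^{(m,r)}}s_{n}(x,y,z)=\exp\left(D_{x}^{-1}\frac{\partial^{m}}{\partial y^{m}}\right){}_{H^{(r)}}s_{n}(y,z),
\]
which is the assertion.

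A second, more computational route mirrors the PDE argument of Theorem~4.2. Differentiating the defining generating function (the index-$m$ analogue of \eqref{2.1b}) and using that the Bessel--Tricomi factor satisfies $\left(\frac{\partial}{\partial x}x\frac{\partial}{\partial x}\right)C_{0}(-x(H(t))^{m})=(H(t))^{m}C_{0}(-x(H(t))^{m})$, one obtains the Laguerre-type heat relation
\[
\frac{\partial^{m}}{\partial y^{m}}\,{}_{{}_LH^{(m,r)}}s_{n}(x,y,z)=\frac{\partial}{\partial x}x\frac{\partial}{\partial x}\,{}_{{}_LH^{(m,r)}}s_{n}(x,y,z),
\]
together with the initial datum ${}_{{}_LH^{(m,r)}}s_{n}(0,y,z)={}_{H^{(r)}}s_{n}(y,z)$; solving this problem forward in $x$ reproduces the operator $\exp\left(D_{x}^{-1}\partial_{y}^{m}\right)$, since $\exp(\lambda D_{x}^{-1})\{1\}=C_{0}(-\lambda x)$ is precisely the eigenfunction-builder for $\partial_{x}x\partial_{x}$. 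I would keep the first route as the main argument, because it sidesteps any appeal to uniqueness for the Laguerre heat flow. The only real obstacle is bookkeeping: making sure the index-$m$ form of Theorem~4.1 is the statement being cited and that the $x=0$ reduction to the GHSP is applied correctly, the exponential splitting itself being routine once commutativity is in hand.
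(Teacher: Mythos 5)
Your proposal is correct, but your main argument is genuinely different from the one the paper gives. The paper proves this theorem by your \emph{second} route: it differentiates the generating function to obtain the operational relation $\frac{\partial^{m}}{\partial y^{m}}\,{}_{{}_LH^{(m,r)}}s_{n}(x,y,z)=\frac{\partial}{\partial x}x\frac{\partial}{\partial x}\,{}_{{}_LH^{(m,r)}}s_{n}(x,y,z)$ (writing $\frac{\partial}{\partial D_{x}^{-1}}:=\frac{\partial}{\partial x}x\frac{\partial}{\partial x}$), observes from the table of special cases that the $x=0$ reduction is precisely the GHSP ${}_{H^{(r)}}s_{n}(y,z)$, and then solves this evolution problem in the variable $D_{x}^{-1}$ to produce the operator $\exp\left(D_{x}^{-1}\partial_{y}^{m}\right)$ --- exactly the PDE argument you sketch and set aside. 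Your preferred route instead starts from the index-$m$ analogue of the single-exponential representation of Theorem~4.1 and splits $\exp\left(D_{x}^{-1}\partial_{y}^{m}+z\partial_{y}^{r}\right)$ into two commuting blocks, identifying $\exp\left(z\partial_{y}^{r}\right)s_{n}(y)$ with ${}_{H^{(r)}}s_{n}(y,z)$. What your route buys is that it avoids any appeal to uniqueness for the operational heat flow, reducing everything to the trivial commutativity of $D_{x}^{-1}\partial_{y}^{m}$ and $z\partial_{y}^{r}$; what it costs is that it presupposes the index-$m$ form of the Theorem~4.1 representation (which the paper states only with $\partial_{y}^{2}$), whereas the paper's argument works directly from the generating function and the $x=0$ specialisation. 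At the formal operational level on which the whole paper operates, both arguments are equally valid.
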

\noindent
{\bf{Proof.}}~From equations (1.15) and (2.1) (or (2.2)), we have\\
$$\frac{\partial^{m}}{\partial y^{m}}~{_{{}_LH^{(m.r)}}s_{n}(x,y,z)}=\frac{\partial}{\partial D_{x}^{-1}}~{_{{}_LH^{(m,r)}}s_{n}(x,y,z)},\eqno(4.7)$$
where (\cite{2GHMDat}; p. 32 (8)):\\
$$\frac{\partial}{\partial D_{x}^{-1}}:=\frac{\partial}{\partial x}x\frac{\partial}{\partial x}.$$

Since, in view of Table 1(IV), we have
$${}_LH_{n}^{(m,r)}(0,y,z)=H^{(r)}(y,z).\eqno(4.8)$$

Therefore, from Example 4 of Section 3, we have
$$_{{}_LH^{(m,r)}}s_{n}(0,y,z)=_{H^{(r)}}s_{n}(y,z).\eqno(4.9)$$

Solving equation (4.7) subject to initial condition (4.9), we get assertion (4.6).\\


\newpage
\noindent

Next, we prove the integral representations for the  LeGHSP ${}_{{}_{s}LeG^{(r)}}s_{n}(x,y,z)$ and $\frac{{}_{{}_{r}LeG^{(r)}}s_{n}(x,y,z)}{n!}$ in the form of following theorems:\\

\begin{theorem}\label{t4.4} 
The following integral representation for the  LeGHSP ${}_{{}_{s}LeG^{(r)}}s_{n}(x,y,z)$ holds true:
\begin{equation}\label{4.8}
{}_{{}_{s}LeG^{(r)}}s_{n}(x,y,z)=\int_{0}^{\infty} e^{-s} {{}_{{}_{s}LeG^{(r)}}s_{n}(x,y,sD_{x}^{-1})}\,ds.
\end{equation}
\end{theorem}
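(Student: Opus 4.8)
The plan is to derive \eqref{4.8} from the operational representation proved above, namely
\[
{}_{{}_{s}LeG^{(r)}}s_{n}(x,y,z)=\exp\left(D_{x}^{-1}\frac{\partial^{2}}{\partial y^{2}}+z\frac{\partial^{r}}{\partial y^{r}}\right)s_{n}(y),
\]
reducing the claim to a single Laplace-type identity for the Bessel--Tricomi function. Since the operators $D_{x}^{-1}\partial_{y}^{2}$ and $z\,\partial_{y}^{r}$ commute, I would first factor the exponential as $\exp(D_{x}^{-1}\partial_{y}^{2})\exp(z\,\partial_{y}^{r})$, so that the whole dependence on the Gould--Hopper variable $z$ is carried by the second factor, while the first factor, which is responsible for the $C_{0}(-x(\cdot)^{2})$ part of the generating function \eqref{2.1a}, is left inert.

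The heart of the matter is to represent the factor $\exp(z\,\partial_{y}^{r})$ as an integral. Expanding the Bessel--Tricomi series \eqref{tricomi} and using the Euler integral $\int_{0}^{\infty}e^{-s}s^{k}\,ds=k!$ gives the Laplace-type relation
\[
\int_{0}^{\infty}e^{-s}\,C_{0}(-s\,\xi)\,ds=\sum_{k=0}^{\infty}\frac{\xi^{k}}{(k!)^{2}}\int_{0}^{\infty}e^{-s}s^{k}\,ds=\sum_{k=0}^{\infty}\frac{\xi^{k}}{k!}=\exp(\xi),
\]
which is exactly the device that trades the double factorial $(k!)^{2}$ of $C_{0}$ for the single factorial $k!$ of the exponential. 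I would apply it with $\xi=z\,\partial_{y}^{r}$, and then invoke the operational definition \eqref{tricomi-exp} to recognise the factor $C_{0}(-s\,z\,\partial_{y}^{r})$ as precisely what results from promoting $z$ to the inverse-derivative operator $sD_{x}^{-1}$ (acting on the vacuum $\{1\}$) inside $\exp(z\,\partial_{y}^{r})$. Reinstating the inert factor $\exp(D_{x}^{-1}\partial_{y}^{2})$ and letting both operators act on $s_{n}(y)$ rebuilds the integrand as ${}_{{}_{s}LeG^{(r)}}s_{n}(x,y,sD_{x}^{-1})$, and comparison of the coefficients of $t^{n}/n!$ then yields \eqref{4.8}.

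The step I expect to be the main obstacle is the operator bookkeeping in this last identification: one must be careful that the inverse derivative coming from the substitution $z\mapsto sD_{x}^{-1}$ is understood to act on the unit, so that the substituted factor is genuinely the Bessel--Tricomi series $C_{0}(-s\,z\,\partial_{y}^{r})$ on which the Euler integral collapses, rather than a geometric series in $D_{x}^{-1}\partial_{y}^{r}$. Equivalently, this inverse-derivative structure must be kept separate from, and commuted past, the explicit $x$-dependence carried by the $C_{0}(-x(\cdot)^{2})$ factor. Once every operator is written as a power series in commuting symbols, the interchange of $\int_{0}^{\infty}e^{-s}(\cdot)\,ds$ with the expansion is justified termwise by the absolute convergence guaranteed by $\int_{0}^{\infty}e^{-s}s^{k}\,ds=k!$, and the computation closes.
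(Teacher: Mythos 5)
Your proposal is correct, but it reaches \eqref{4.8} by a genuinely different route from the paper. The paper works at the level of the generating function: it writes $\sum_{n}{}_{{}_{s}LeG^{(r)}}s_{n}(x,y,z)\frac{t^{n}}{n!}=\frac{1}{g(f^{-1}(t))}\sum_{n}{}_{S}H_{n}^{(r)}(x,y,z)\frac{(f^{-1}(t))^{n}}{n!}$, imports as a black box the known integral representation ${}_{S}H_{n}^{(r)}(x,y,z)=\int_{0}^{\infty}e^{-s}\,{}_{S}H_{n}^{(r)}(x,y,sD_{z}^{-1})\,ds$ from \cite{ghazala}, interchanges sum and integral, reassembles the generating function \eqref{2.1a}, and equates coefficients of $t^{n}/n!$. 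You instead start from the operational representation ${}_{{}_{s}LeG^{(r)}}s_{n}(x,y,z)=\exp\left(D_{x}^{-1}\partial_{y}^{2}+z\partial_{y}^{r}\right)s_{n}(y)$ of Theorem 4.1, factor the exponential, and prove the underlying Euler--Laplace identity $\int_{0}^{\infty}e^{-s}C_{0}(-s\xi)\,ds=e^{\xi}$ directly from the series \eqref{tricomi}; this makes the argument self-contained (you re-derive rather than cite the mechanism that trades $(k!)^{2}$ for $k!$) and it dispenses entirely with the generating function, so your closing remark about ``comparison of the coefficients of $t^{n}/n!$'' is actually superfluous in your setup --- the identity already holds at the level of each fixed $n$. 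One point to fix: for the identity to close, the inverse derivative produced by the substitution must be taken with respect to the variable being replaced, i.e.\ $z\mapsto sD_{z}^{-1}$, since $\exp\left(sD_{z}^{-1}\partial_{y}^{r}\right)\{1\}=C_{0}\left(-sz\partial_{y}^{r}\right)$, which is exactly the factor your Laplace identity collapses; with $sD_{x}^{-1}$ the integral would instead produce $\exp\left(x\partial_{y}^{r}\right)$ and the statement would be false. You have copied $sD_{x}^{-1}$ from the theorem as printed, but note that the paper's own proof silently switches to $sD_{z}^{-1}$ inside the integral --- the subscript $x$ in \eqref{4.8} is a typo in the source, and your computation (correctly phrased in terms of $C_{0}(-sz\partial_{y}^{r})$) confirms this.
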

\begin{proof}
In view of \eqref{leGHP1} and \eqref{2.1a}, we write

\begin{equation}\label{4.11}
\sum_{n=0}^{\infty}{{}_{{}_{s}LeG^{(r)}}s_{n}(x,y,z)}\frac{t^n}{n!}=\frac {1}{g\big(f^{-1}(t)\big)}\sum_{n=0}^{\infty}{}_{S}H_{n}^{(r)}(x,y,z)\frac{\big(f^{-1}(t)\big)^n}{n!}.
\end{equation}

Using the following integral representation of LeGHP ${}_{S}H_{n}^{(r)}(x,y,z)$ \cite{ghazala}:
\begin{equation}
{}_SH_{n}^{(r)}(x,y,z)~=~\int_{0}^{\infty} e^{-s} ~{}_SH_{n}^{(r)}(x,y,sD_{z}^{-1}) ~ds~,
\end{equation}
in the r.h.s. of equation \eqref{4.11}, we find
\begin{equation}
\sum_{n=0}^{\infty}{{}_{{}_{s}LeG^{(r)}}s_{n}(x,y,z)}\frac{t^n}{n!}=\frac {1}{g\big(f^{-1}(t)\big)}\int_{0}^{\infty} e^{-s} \left(\sum_{n=0}^{\infty}{}_{S}H_{n}^{(r)}(x,y,sD_{z}^{-1})\frac{\big(f^{-1}(t)\big)^n}{n!}\right) ~ds.
\end{equation}

Now, making use of the generating function \eqref{leGHP1} in the r.h.s. of the above equation, we have
\begin{equation*}
\sum_{n=0}^{\infty}{{}_{{}_{s}LeG^{(r)}}s_{n}(x,y,z)}\frac{t^n}{n!}=\int_{0}^{\infty} e^{-s} \left(\frac {1}{g\big(f^{-1}(t)\big)}\,C_0(-x(f^{-1}(t))^{2})\exp(y(f^{-1}(t))+sD_{z}^{-1}(f^{-1}(t))^{r}\right) ~ds,
\end{equation*}
which in view of\eqref{2.1a} yields
\begin{equation}
\sum_{n=0}^{\infty}{{}_{{}_{s}LeG^{(r)}}s_{n}(x,y,z)}\frac{t^n}{n!}=\sum_{n=0}^{\infty}\,\left(\int_{0}^{\infty} e^{-s}{{}_{{}_{s}LeG^{(r)}}s_{n}(x,y,sD_{z}^{-1})}\,ds \right)\frac{t^{n}}{n!}
\end{equation}

Finally, equating the coefficients of like powers of $t$ above, we get assertion \eqref{4.8}.
\end{proof}
\begin{theorem}\label{t4.5}
The following integral representations for $\frac{{}_{{}_{r}LeG^{(r)}}s_{n}(x,y,z)}{n!}$ hold true:
\begin{equation}
{}_{{}_{r}LeG^{(r)}}s_{n}(x,y,z)=\int_{0}^{\infty} e^{-s} {{}_{{}_{r}LeG^{(r)}}s_{n}(x,y,sD_{x}^{-1})}\,ds.
\end{equation}
\end{theorem}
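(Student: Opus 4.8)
The plan is to run the argument of Theorem \ref{t4.4} in parallel, with the family ${}_SH_n^{(r)}$ and its generating function \eqref{leGHP1} replaced throughout by the companion family $\frac{{}_RH_n^{(r)}(x,y,z)}{n!}$ and its generating function \eqref{leGHP2}. The only genuinely new ingredient is the Laplace-type integral representation of $\frac{{}_RH_n^{(r)}(x,y,z)}{n!}$, which I would take from \cite{ghazala} in the form
\begin{equation*}
{}_RH_n^{(r)}(x,y,z)=\int_0^\infty e^{-s}\,{}_RH_n^{(r)}(x,y,sD_x^{-1})\,ds.
\end{equation*}

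First I would rewrite the defining relation \eqref{2.9}: substituting $t\mapsto f^{-1}(t)$ into \eqref{leGHP2} and multiplying through by $1/g\big(f^{-1}(t)\big)$ expresses the generating series of $\frac{{}_{{}_{r}LeG^{(r)}}s_{n}(x,y,z)}{n!}$ in terms of the ${}_RH$ series, namely
\begin{equation*}
\sum_{n=0}^{\infty}{}_{{}_{r}LeG^{(r)}}s_{n}(x,y,z)\frac{t^n}{(n!)^2}=\frac{1}{g\big(f^{-1}(t)\big)}\sum_{n=0}^{\infty}\frac{{}_RH_n^{(r)}(x,y,z)}{n!}\frac{\big(f^{-1}(t)\big)^n}{n!},
\end{equation*}
the exact counterpart of \eqref{4.11}. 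I would then insert the integral representation above into the right-hand side and, after justifying the interchange of the (absolutely convergent) $s$-integral with the summation over $n$, read \eqref{leGHP2} backwards — now with $z$ replaced by the operator $sD_x^{-1}$ and with $t\mapsto f^{-1}(t)$ — so that the bracketed sum collapses to
\begin{equation*}
\frac{1}{g\big(f^{-1}(t)\big)}C_0\!\left(xf^{-1}(t)\right)C_0\!\left(-yf^{-1}(t)\right)\exp\!\left(sD_x^{-1}\big(f^{-1}(t)\big)^r\right).
\end{equation*}
By \eqref{2.9} applied with $z=sD_x^{-1}$, this is precisely $\sum_{n=0}^{\infty}{}_{{}_{r}LeG^{(r)}}s_{n}(x,y,sD_x^{-1})\frac{t^n}{(n!)^2}$, and comparing coefficients of $t^n$ on the two sides delivers the asserted representation.

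The main obstacle will be the integral representation of $\frac{{}_RH_n^{(r)}(x,y,z)}{n!}$ itself, together with the precise bookkeeping of which operator governs the third slot. Since $z$ enters \eqref{leGHP2} only through the ordinary exponential factor $\exp(zt^r)$, the recovery of the original $z$-dependence after integration rests on the pair of identities $D^{-k}\{1\}=(\,\cdot\,)^k/k!$ and $\int_0^\infty e^{-s}s^k\,ds=k!$, which together trade the $(k!)^{-2}$ weights carried by the Tricomi factors against the $(k!)^{-1}$ weight demanded by $\exp(zt^r)$. Verifying that the inverse-derivative operator placed in the third argument is exactly the one that regenerates the $z$-dependence under the Laplace kernel — the same delicate point already underlying Theorem \ref{t4.4} — and confirming the legitimacy of the term-by-term integration, is where the real content of the proof lies; the remaining steps are the formal substitutions described above.
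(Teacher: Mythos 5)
Your proposal is correct and is essentially the paper's own proof: the paper simply states that Theorem \ref{t4.5} is proved exactly as Theorem \ref{t4.4} and omits all details, and your parallel argument --- generating relation \eqref{2.9} in place of \eqref{2.1a}, the Laplace-type representation of $\frac{{}_RH_n^{(r)}(x,y,z)}{n!}$ from \cite{ghazala} in place of that of ${}_SH_n^{(r)}(x,y,z)$, then reading \eqref{leGHP2} backwards and comparing coefficients --- is precisely that omitted argument. Your closing caveat about which inverse-derivative operator occupies the third slot is well taken: the regeneration of the $z$-dependence under the Laplace kernel requires $D_z^{-1}$ (as in the intermediate steps of the paper's proof of Theorem \ref{t4.4}), and the $D_x^{-1}$ appearing in both theorem statements is a typographical slip of the paper that you have correctly identified as the only delicate point.
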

\begin{proof}
The proof of Theorem \ref{t4.4} is same as of Theorem \ref{t4.5}. So we omit all the details.
\end{proof}
\vspace{.35cm}
In Section 3, we have obtained the results for the new and known families of special polynomials related to Sheffer sequences by taking the special cases of the LGHP $_LH^{(m,r)}_{n}(x,y,z)$. In the Appendix section, we consider certain special polynomials belonging to the Sheffer and associated Sheffer families and obtain the results for the corresponding mixed special polynomials.\\

\noindent
{\bf{5.~~Appendix}}\\

The Sheffer class contains important sequences such as the Hermite, Laguerre, Bernoulli, Poisson-Charlier polynomials {\it etc}. These polynomials are important from the view point of applications in physics and number theory. Also, the associated Sheffer family contains Mittag-Leffler, exponential, lower factorial polynomials {\it etc}.\\

We present the lists of some known members of the Sheffer and associated Sheffer families in Tables 12 and 13 respectively.\\

\noindent
\textbf{Table 12. Some members of the Sheffer family}\\
\\
{\tiny{
		\begin{tabular}{lllll}
			\hline
			&&&&\\
			\bf{S. No.} & \bf{$g(t)$; $A(t)$} & \bf{$f(t)$; $H(t)$} & \bf{Generating Functions} & \bf{Polynomials}\\
			\hline
			I. & $e^{(\frac{t}{\nu})^{k}}$; ~$e^{-t^{k}}$ &$\frac{t}{\nu}$; ~$\nu t$ & $\exp(\nu xt-t^k)$&Generalized Hermite \\
			&&&$=\sum\limits_{n=0}^{\infty} H_{n,k,\nu}(x)\frac{t^n}{n!}$&polynomials \\
			&&&&$H_{n,k,\nu}(x)$ \cite{2Lah}\\
			\hline
			II. & $(1-t)^{-\alpha-1}$; ~$(1-t)^{-\alpha-1}$ & $\frac{t}{t-1}$; ~$\frac{t}{t-1}$ &$\frac{1}{(1-t)^{\alpha+1}}\exp (\frac{xt}{t-1})$ &Generalized Laguerre \\
			&&&$=\sum\limits_{n=0}^{\infty} L_{n}^{(\alpha)}(x)t^{n}$&polynomials\\
			&&&&$n!L_{n}^{\alpha}(x)$ \cite{2And, 2Rain}\\
			\hline
			III.& $\frac{2}{e^t-1}$; ~$\frac{t}{1-t}$ & $\frac{e^t-1}{e^t+1}$; ~$\ln (\frac{1+t}{1-t})$ & $\frac{t}{1-t}(\frac{1+t}{1-t})^{x}$& Pidduck polynomials\\
			&&&$=\sum\limits_{n=0}^{\infty} P_{n}(x)\frac{t^n}{n!}$ &$P_{n}(x)$ \cite{2Boas, 2Erd}\\
			\hline
			IV.& $(1-t)^{-\beta}$; ~$e^{\beta t}$ & $\ln (1-t)$; ~$1-e^t$ & $\exp (\beta t+x(1-e^t))$& Acturial polynomials \\
			&&&$=\sum\limits_{n=0}^{\infty} a_{n}^{(\beta)}(x)\frac{t^n}{n!}$&$a_{n}^{(\beta)}(x)$ \cite{2Boas}\\
			\hline
			V.& $\exp (a(e^t-1))$; ~$e^{-t}$ & $a(e^t-1)$; ~$\ln (1+\frac{t}{a})$ & $e^{-t}(1+\frac{t}{a})^{x}$& Poisson-Charlier \\
			&&&$=\sum\limits_{n=0}^{\infty} c_{n}(x;a)\frac{t^n}{n!}$&polynomials\\
			&&&&$c_{n}(x;a)$ \cite{2Erde, 2Jor, 2Sze}\\
			\hline
			VI. & $(1+e^{\lambda t})^{\mu}$; ~$(1+(1+t)^{\lambda})^{-\mu}$ & $e^t-1$; ~$\ln (1+t)$ & $(1+(1+t)^{\lambda})^{-\mu}(1+t)^{x}$& Peters polynomials\\
			&&&$=\sum\limits_{n=0}^{\infty} S_{n}(x;\lambda,\mu)\frac{t^n}{n!}$&$S_{n}(x;\lambda,\mu)$ \cite{2Boas}\\
			\hline
			VII.& $\frac{t}{e^t-1}$; ~$\frac{t}{\ln (1+t)}$ & $e^t-1$; ~$\ln (1+t)$ & $\frac{t}{\ln (1+t)}(1+t)^{x}$ & Bernoulli polynomials \\
			&&&$=\sum\limits_{n=0}^{\infty} b_{n}(x)\frac{t^n}{n!}$&of the second kind \\
			&&&&$b_{n}(x)$ \cite{2Jor}\\
			\hline
			VIII.& $\frac{1}{2}(1+e^t)$; ~$\frac{2}{2+t}$ &$e^t-1$; ~$\ln (1+t)$ & $\frac{2}{2+t}(1+t)^{x}$& Related polynomials\\
			&&&$=\sum\limits_{n=0}^{\infty} r_{n}(x)\frac{t^n}{n!}$&$r_{n}(x)$ \cite{2Jor}\\
			\hline
			IX.& $\sec t$; ~$\frac{1}{\sqrt{1+t^{2}}}$ & $\tan t$; ~$\arctan(t)$ & $\frac{1}{\sqrt{1+t^{2}}}\exp (x \arctan(t))$& Hahn polynomials\\
			&&&$=\sum\limits_{n=0}^{\infty} R_{n}(x)\frac{t^n}{n!}$&$R_{n}(x)$ \cite{2Bend}\\
			\hline
			X.& $\frac{1+t}{(1-t)^{a}}$; ~$(1-4t)^{-\frac{1}{2}}\left(\frac{2}{1+\sqrt{1-4t}}\right)^{a-1}$ & $\frac{1}{4}-\frac{1}{4}\left(\frac{1+t}{1-t}\right)^{2}$; ~$\frac{-4t}{(1+\sqrt{1-4t})^{2}}$ & $(1-4t)^{-\frac{1}{2}}\left(\frac{2}{1+\sqrt{1-4t}}\right)^{a-1} $& Shively's psedo-Laguerre\\
			&&&$\times\exp \left(\frac{-4xt}{(1+\sqrt{1-4t})^{2}}\right)$&polynomials\\
			&&&$=\sum\limits_{n=0}^{\infty} R_{n}(a,x)t^n$ & $R_{n}(a,x)$ \cite{2Rain}\\
			\hline
\end{tabular}}}\\
\\

\noindent \textbf{Table 13.  Some members of the associated Sheffer family}\\
\\
{\tiny{
		\begin{tabular}{llll}
			\hline
			&&&\\
			\bf{S. No.} & ~~~~~~~\bf{$f(t)$; $H(t)$} &~~~~~~~~ \bf{Generating Functions}& ~~~~~~~~\bf{Polynomials}~~~~~~~~\\

			\hline
			I.& $\frac{e^t-1}{e^t+1}$; $\ln\left(\frac{1+t}{1-t}\right)$ & $\left(\frac{1+t}{1-t}\right)^x=\sum_{n=0}^{\infty}M_{n}(x)\frac{t^n}{n!}$ & Mittag-Leffler polynomials $M_{n}(x)$ \cite{2Bat} \\
			
			\hline
			&&&\\
			II.& $\ln(1+t)$; $e^t-1$ & $\exp(x(e^t-1))=\sum_{n=0}^{\infty}\phi_{n}(x)\frac{t^n}{n!}$ & Exponential polynomials $\phi_{n}(x)$ \cite{2Bell}  \\
			
			\hline
			&&&\\
			III.&  $e^t-1$ ; $\ln(1+t)$& $(1+t)^{x}=\sum_{n=0}^{\infty}(x)_{n}\frac{t^n}{n!}$ & Lower factorial polynomials $(x)_{n}$ \cite{2SRom} \\
			
			\hline
			&&&\\
			IV.&  $-\frac{1}{2}t^{2}+t$ ; $1-\sqrt {1-2t}$& $\exp \left(x(1-\sqrt {1-2t})\right)=\sum_{n=0}^{\infty}p_{n}(x)\frac{t^n}{n!}$ & Bessel polynomials $p_{n}(x)$ \cite{2Car, 2Krall} \\
			
			\hline
			
\end{tabular}}}\\
\\

\noindent
{\bf{Remark~5.1.}}~We remark that corresponding to each member belonging to the Sheffer (or associated Sheffer) family, there exists a new special polynomial belonging to the LeGHSP (or LeGHASP) family. The generating function and other properties of these special polynomials can be obtained from the results derived in Section 2.\\

Thus, by taking $g(t)$ (or $A(t)$) and $f(t)$ (or $H(t)$) of the special polynomials belonging to Sheffer family (Table 12 (I to X)) in equations \eqref{2.1a} (or \eqref{2.1b}), \eqref{2.9} (or \eqref{2.10}), \eqref{2.14} (or \eqref{2.15}), \eqref{2.16} (or \eqref{2.17}), \eqref{2.30} and \eqref{2.31}, we can get the generating function, multiplicative and derivative operators for the corresponding members belonging to the LeGHSP family.\\

\noindent

\vspace{2.5cm}
\noindent

\noindent

\end{document}